\newcommand{\Naturel}{\mathbb N}
\newcommand{\Relatif}{\mathbb Z}
\newcommand{\Rationnel}{\mathbb Q}
\newcommand{\Reel}{\mathbb R}
\newcommand{\HR}{{^*\Reel}}
\newcommand{\Hal}{\text{\rm Hal}}
\newcommand{\Gal}{\text{\rm Gal}}
\newcommand{\KK}{\mathbb K}
\newcommand{\Eps}{\varepsilon}
\newtheorem{definition}{Definition}
\newtheorem{theoreme}{Theorem}
\newtheorem{corollaire}{Corollary}
\newtheorem{proposition}{Proposition}
\begin{document}

\title{\textbf{Contraction of a Generalized\\ Metric Structure}}
\author{Guy Wallet}

\maketitle

\begin{center}
{\small Laboratoire de Mathématiques, Image et Applications \\
Pôle Sciences et Technologie de l'Université de La Rochelle \\ Avenue
Michel Crépeau \hspace{3mm}
17042 La Rochelle cedex 1 \\ guy.wallet@univ-lr.fr }
\end{center}

\textbf{Abstract}: {\small 
In some scientific fields, a scaling is able to modify the topology of an observed object. Our goal in the present work is to introduce a new formalism adapted to the mathematical representation of this kind of phenomenon. To this end, we introduce a new metric structure - the galactic spaces - which depends on an ordered field extension of $\Reel$. Moreover, some natural transformations of the category of galactic spaces, the contractions, are of particular interest: they generalize usual homotheties, they have a ratio which may be an infinitesimal, they are able to modify the topology and they satisfy a nice composition rule. With the help of nonstandard extensions we can associate to any metric space an infinite family of galactic spaces; lastly, we study some limit properties of this family.  }
 
\vskip 2mm

\textbf{Key words}: {\small Scaling, contraction, generalized distance, galactic space.}

\textbf{AMS classification}: {\small 03H05, 12J15, 26E35, 54E35.}

\section{Introduction: scaling and topology}

It is well known that the notion of scale is fundamental in empirical sciences: the properties of an object generally depend on a given scale and a change of scale (a scaling) may deeply modify some of these properties. In some fields like Image Processing, Geographical Information Systems and Spatial Analysis, one of the major effects of a scaling is a possible modification of the topology. An elementary example: a city $A$ which is inside a geographical area $B$ at some scale may be located on the boundary of $B$ when considered at a smaller scale. A general question is to be able to take into account these topological deformation phenomena \cite{J99,JB95,GW07}. For instance, it is a real problem to identify a given object represented at different scales. Let us notice that the scalings for which we hope a determinist law are the \emph{contractions}, i.e. the scalings which decrease  the scale (and the size of the objects).

The mathematical transformation naturally related to a scaling is the notion of homothety (for instance in an affine space). But an homothety is an homeomorphism; thus, it leaves invariant the topology. On the basis of this observation, many experts in Geographical Information Systems and Spatial Analysis concluded that a scaling is a natural transformation which cannot be exactly represented by a mathematical transformation. Although this opinion is the expression of a real difficulty, it underestimates the modeling capacity of mathematics.

In order to progress in our analysis, we must introduce a general mathematical framework  adapted to this kind of problem. For that purpose, we consider the class $\mathcal{E}$ of all metric spaces. Given a real number $\lambda >0$, the homothety of ratio $\lambda$ is the quit trivial transformation operating on $\mathcal{E}$ which maps each metric space $E:=(E,d)$ to the new one $\lambda E:=(E,\lambda d)$. We notice that we have the nice composition property $\lambda(\mu E)=(\lambda \mu)E$ and more generally, that the homotheties result from the left action$(\lambda,E) \mapsto \lambda E$ of the multiplicative group $\Reel_+^*$ on $\mathcal{E}$. Nevertheless, since the distance $d$ and $\lambda d$ define the same topology on the set $E$, we find again the invariance property of topology by an homothety. Thus, it is still true that an homothety is not a good representative of a scaling. 

On the other side, we can also agree that an empirical scaling is much more that a simple change of size. In reality, \emph{a concrete scaling seems to be the union of two distinct but dependent processes: (1) an homothety which changes, possibly very strongly, the size of any object, (2) a simplification which allows to neglect too small details}. In order to build a convenient mathematical concept of scaling, we have to translate simultaneously these two processes in an appropriate mathematical notion. Actually, a major work was already done on this topic. It is about the limit of a sequence $(\lambda_n E)$ in $\mathcal{E}$ for the Gromov-Hausdorff distance where $E$ is a given metric space and $(\lambda_n)$ is a sequence in $\Reel_+^*$ such that $\lambda_n \rightarrow 0$. Introduced by Gromov in his study on group of polynomial growth in 1981 \cite{Grom81,GromLafPans,Grom99}, this concept collects the two main aspects of a concrete scaling: the sequence $(\lambda_n)$ corresponds to the strong homothety (actually a strong contraction) and the limit corresponds to the simplification process. Nevertheless, it is not easy to handle with this kind of limit because there are few general results of convergence for a  sequence of the type $(\lambda_n E)$. In 1984 \cite{vdDW}, Van den Dries and Wilkie defined a non standard alternative: the asymptotic cone of a metric space $E$ relative to a center $x_0$ in a nonstandard extension ${^*\!E}$ of $E$ and for a similitude ratio $\lambda$ which is now an infinitesimal hyperreal number. The main advantage of this construction is that its existence is certain, whatever be $E$, $\lambda$ and $x_0$; its strongest disadvantage is its transcendent character due to its dependance to a non trivial ultrafilter.   Since their introduction, these concepts have been the subject of many deep and interesting works at the border of group theory, topology and logic (for instance \cite{Drutu01,DruSap05,KSTT,EO05}). Nevertheless, we notice that, in the framework of this two approaches, it is difficult to formulate and give a simple meaning to a composition rule of two scaling. 

Our study is not strictly in the field of the preceding works on asymptotic cones. Indeed, our main goal is to bring a positive answer to the following question. \emph{Is it possible to  generalize the notion of homothety so as to get a class $\mathcal{S}$ of transformations operating on a class $\mathcal{G}$ of spaces with the properties set out below?
\begin{enumerate}
\item Each space $G \in \mathcal{G}$ is provided with a kind of metric structure which is a generalization of the structure of a metric space.
\item Each transformation $s \in \mathcal{S}$ modify (possibly very strongly) the size of any object and the strength of this modification is measured by a ration $\lambda_s$.
\item The underlying topological structure   is possibly altered by such operations.
\item For each $s,s' \in \mathcal{S}$, there is the nice composition rule  $\lambda_{s \circ s'}=\lambda_s . \lambda_{s'}$.
\end{enumerate}}
The base of the present work is the observation that the construction of a nonstandard asymptotic cone actually produces an intermediary space which carries a more general metric structure than the structure of metric space. We call this new structure a galactic space. A remarkable feature is that the notion of galactic space is not strictly dependent of the nonstandard framework: to define it, we only need an ordered field extension of $\Reel$. Moreover, some natural transformations of the category of galactic spaces, the contractions, are of particular interest: they generalize usual homotheties but they have a ratio which may be an infinitesimal, they carry out a simplification process and they satisfy a nice composition rule. Thus, at the level of galactic spaces, the contractions seem to be good representatives of scalings. 


It seems probable that partially similar structures have already been used in other contexts. For instance, this is the case for the basic notion of distance taking its values in a ordered quotient group. It is only at the time he was finishing the final bibliography of this paper that the author found in \cite{KT04} the notion of ultracone which is almost an example of the general structure of galactic space.

\section{Infinitesimals in an extension field of $\Reel$}

\textbf{2.1} In all this study, we consider a \emph{proper ordered field
extension} $\KK$ of the field $\Reel$ of real numbers. Thus, $\KK$ is an ordered field, $\Reel \subsetneq \KK$ and the restriction to $\Reel$ of the ordered field structure of $\KK$ is the usual ordered field structure or $\Reel$.

We may think to some algebraic examples such as the field $\Reel(X)$
of rational fractions
$$\Reel(X) := \left\{\frac{P(X)}{Q(X)} \ ; \ (P(X),Q(X)) \in
\Reel[X]^2 \ \text{and} \ Q(X) \neq 0\right\}$$
or the field $\Reel((X))$ of Laurent power series
$$\Reel((X)) := \left\{ \sum_{i=m}^{+\infty}a_iX^i \ ; \ m \in
\Relatif \ \text{and} \ \forall i \geq m \ a_i \in \Reel \right\}$$
or the field of Puiseux series with real coefficients 
$$\Reel[[X^\Rationnel]]:=\bigcup_{n \geq 1} \Reel((X^{1/n}))$$
both provided with the order relation for which $0<X<1/n$ for all $n \in \Naturel^*$.

Another intersting example arises from nonstandard analysis: a
field $\HR$ or hyperreal numbers \cite{ANSLoeb,R66}. The simplest way to construct a field of hyperreal numbers is to take an ultra-power of $\Reel$.
To this end, we choose a non principal ultrafilter $\mathcal{U}$ on the
set $\Naturel$ of natural numbers (0 included). That means that
$\mathcal{U}$ is a family of subsets of $\Naturel$ such that
\begin{enumerate}
\item $\mathcal{U} \neq \emptyset$ and $\emptyset \not\in
\mathcal{U}$,
\item $\forall (U,V) \in \mathcal{P}(\Naturel) \
 (U \in \mathcal{U} \ \text{and} \ U \subset V \ \Longrightarrow
 \ V \in \mathcal{U})$,
\item $\forall (U,V) \in \mathcal{U} \ \ U \cap V \in \mathcal{U}$,
\item $\forall F \in \mathcal{P}(\Naturel) \ ( F \ \text{finite} \
\Longrightarrow F \not\in \mathcal{U})$.
\end{enumerate}
Then, we consider the set $\Reel^{\Naturel}$  of sequences of real
numbers and the equivalence relation $\sim_{\mathcal{U}}$ on
$\Reel^{\Naturel}$ such that $(x_n) \sim_{\mathcal{U}} (y_n)$ if and
only if $\{n \in \Naturel \ ; \ x_n=y_n\} \in \mathcal{U}$. The set
$\HR$ of hyperreal numbers is the quotient set
$\Reel^{\Naturel}/\sim_{\mathcal{U}}$. It is easy to check directly
(whithout using any special logical tool) that $\HR$ is an ordered
field for the natural operations and the order relation
$$[(x_n)]_{\mathcal{U}} \leq [(y_n)]_{\mathcal{U}} \
\Longleftrightarrow \ \{n \in \Naturel \ ; \ x_n \leq y_n \} \in
\mathcal{U}$$ where $[(x_n)]_{\mathcal{U}}$ and
$[(y_n)]_{\mathcal{U}}$ are the equivalence classes of the sequences
$(x_n)$ and $(y_n)$. The map $*:\Reel \rightarrow \HR$ such that the
image of any $x \in \Reel$ is the equivalence class of the sequence
of constant value $x$ is clearly a field morphism which preserves order relations. Thus, $\HR$ is an ordered field extension of
$\Reel$. Moreover, the class of the sequence
$(0,1,2,\ldots,n,\ldots)$ does not belong to the image of $\Reel$ in
$\HR$. Consequently, this extension is proper.

Now, we consider the general case of a proper ordered extension $\KK$ or $\Reel$.
Let $s$ an element of $\KK$; then $s$ is \emph{infinitely small} and
we write $s \simeq 0$ if $|s| < \frac{1}{n}$ for each $n \in
\Naturel^*$; $s$ is \emph{infinitely large} if $n < |s|$ for each $n
\in \Naturel$ (we write $s \simeq +\infty$ if $s>0$ and $s \simeq
-\infty$ if $s<0$); $s$ is \emph{limited} if it is not infinitely
large. We see that the inverse of an infinitely large element is an
infinitely small element.

\begin{proposition} 
 In $\KK$, there are infinitely large elements and non null infinitely small
 elements. Moreover, for each $s \in \KK$ which is not infinitely large, there
 exists one and only one ${^o\!s} \in \Reel$ such that $s \simeq
 {^o\!s}$.
\end{proposition}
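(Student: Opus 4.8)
The plan is to prove the ``moreover'' statement first --- the existence and uniqueness of the real number ${^o\!s}$ attached to a limited $s$ --- using only the Archimedean property and the least-upper-bound property of $\Reel$, and then to deduce the existence of infinitely large elements and of non null infinitely small elements from the mere fact that the extension $\Reel \subsetneq \KK$ is proper.

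For uniqueness I would first record that $\simeq 0$ is stable under addition (if $|a|<\frac{1}{2n}$ and $|b|<\frac{1}{2n}$ for every $n$, then $|a+b|<\frac{1}{n}$ for every $n$). Hence, if $s \simeq r_1$ and $s \simeq r_2$ with $r_1,r_2 \in \Reel$, then $r_1-r_2=(r_1-s)+(s-r_2)\simeq 0$; but $r_1-r_2$ is a \emph{real} number, and by the Archimedean property of $\Reel$ the only infinitely small real number is $0$, so $r_1=r_2$.

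For existence, given a limited $s\in\KK$, I would consider the cut $A:=\{x\in\Reel \ ; \ x\le s\}$. Since $s$ is not infinitely large there is $n\in\Naturel$ with $|s|<n$, so $-n\in A$ and $n$ is an upper bound of $A$ in $\Reel$; by completeness of $\Reel$ the number $r:=\sup A\in\Reel$ exists. It then remains to check that $|s-r|<\frac{1}{m}$ for every $m\in\Naturel^*$, which I would argue by contradiction: if $s\ge r+\frac{1}{m}$ then $r+\frac{1}{2m}\in A$, contradicting $r=\sup A$; and if $s\le r-\frac{1}{m}$ then $r-\frac{1}{2m}$ is an upper bound of $A$ strictly below $\sup A$, again a contradiction. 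So ${^o\!s}:=r$ works, and together with the previous paragraph this proves the second assertion.

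Finally, for the first assertion: since the extension is proper, pick $t\in\KK\setminus\Reel$. If $t$ is infinitely large we already have such an element, and then $1/t$ is a non null infinitely small element. If $t$ is limited, set $\Eps:=t-{^o\!t}$; it is non null because $t\notin\Reel$ while ${^o\!t}\in\Reel$, and $\Eps\simeq 0$, so $\Eps$ is a non null infinitely small element, and its inverse $1/\Eps$ is infinitely large, as already observed just before the statement. I expect the only point requiring care to be the supremum argument, where one must make sure to invoke \emph{only} the order and completeness of $\Reel$ itself; the rest is routine manipulation of the ordered field $\KK$.
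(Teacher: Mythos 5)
Your proof is correct and follows essentially the same route as the paper: it obtains ${^o\!s}$ from the completeness of $\Reel$ via a cut/supremum, and then produces the infinitely large and non null infinitely small elements from an arbitrary $t\in\KK\setminus\Reel$ together with the inversion $s\mapsto 1/s$. Your write-up is merely more detailed, in particular in verifying $s\simeq\sup A$ and the uniqueness via the Archimedean property, which the paper leaves implicit.
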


\begin{proof}
Let $s \in \KK \setminus \Reel$. If $s$ is infinitely large, the
first point is proven. If not, we are under the assumptions of the
second point. Then, we consider the cut $(I,S)$ of $\Reel$ defined
by $I=\{x \in \Reel \ ; \ x<s \}$ and $S=\{x \in \Reel \ ; \ x>s \}$.
This cut defines one and only one real number ${^o\!s}$ such that
${^o\!s}=\sup I$ and ${^o\!s}=\inf S$. Then it is necessary that $s
\simeq {^o\!s}$ and we see that $s-{^o\!s}$ is infinitely small and
different from 0.
\end{proof}

\textbf{2.2} The \emph{halo} of 0 is the set $\Hal(0)$ whose elements are the
infinitely small elements of $\KK$. It is an additive subgroup of
$\KK$ to which is associated the proximity relation $\simeq$ defined
by
$$\forall (s,t) \in \KK^2 \ \left( s \simeq t \
\Longleftrightarrow s-t \in \Hal(0)\right)$$ The halo of any $x \in
\KK$ is the equivalence class $\Hal(x)$ of $x$ for $\simeq$, that is
to say $\Hal(x)=x + \Hal(0)$. The quotient group $\Hal(\KK):=\KK / \Hal(0)$ is the  set of all  $\Hal(x)$ for $x \in \KK$ and the map $\Hal : x \mapsto \Hal(x)$ is the canonical projection of $\KK$ on $\Hal(\KK)$.

The \emph{galaxy} of 0 is the set $\Gal(0)$ whose elements are the
limited elements of $\KK$.

It is clear that $\Hal(0) \subset \Gal(0)$ and that $\Gal(0)$ is
also an additive subgroup of $\KK$. From the preceding proposition,
we deduce that $\text{Gal}(0)/\text{Hal}(0)$ is isomorphic to
$\Reel$ and $\Gal(0)$ is equal to the disjoint union $\displaystyle
\bigcup_{x \in \Reel} \Hal(x)$. Moreover,  there exists a
\emph{principal value map} $\text{pv}:\KK \rightarrow \Reel \cup
\{+\infty,-\infty\}$ such that
$$\forall s \in \KK \ \ \text{pv}(s)=
 \left\{
  \begin{array}{ll}
    {^o\!s} & \text{if $s \in \Gal(0)$ (where ${^o\!s} \in \Reel$ and ${^o\!s} \simeq s$) } \\
    +\infty & \text{if $s \simeq +\infty$} \\
    -\infty & \text{if $s \simeq -\infty$} \\
  \end{array}
 \right.
$$
so that $s \simeq \text{pv}(s)$ for every $s \in \KK$. If $\KK$ is a
field of nonstandard hyperreal numbers ${^*\Reel}$, the map
$\text{pv}$ is usually called the \emph{standard part map} and
denoted $\text{st}$. Due to the fact that the sum of two
infinitesimals is an infinitesimal, we see that
$\text{pv}(s+t)=\text{pv}(s)+\text{pv}(t)$ for every $t,s \in \KK$
such that $s,t \not\simeq \pm \infty$.

Let us consider the quotient group $\Gal(\KK):=\KK/\text{Gal}(0)$
and the canonical projection $\text{Gal}:\KK \rightarrow \Gal(\KK)$.
For each $t \in \KK$, the equivalence class $\Gal(t)=t+\Gal(0)$ is
called the galaxie of $t$.

There is a natural total order relation $\leq$ on $\Gal(\KK)$ defined by
$$\forall (s,t) \in \KK^2 \ \left[ \Gal(s) \leq
\Gal(t) \ \Longleftrightarrow \ (s \leq t \ \text{or} \ s - t \in
\Gal(0))\right]$$
Moreover, this relation is compatible with the additive structure of
$\Gal(\KK)$
$$\forall (s,t) \in \KK^2 \ \left[ (0 \leq \Gal(s) \
\text{and} \ 0 \leq \Gal(t)) \ \Longrightarrow \ 0 \leq
\Gal(s)+\Gal(t)\right]$$ 
These properties mean that $\Gal(\KK)$ is
an ordered additive group.

The terms of halo and galaxy are already used in some development of nonstandard analysis; in this case, they denotes two important classes of external sets \cite{DR89,vdB87}. Our study is not directly in connection with these specific properties. In addition, it is probable there are similar concepts in other contexts using non-archimedean extensions.

\section{Galactic space}

Given any ordered additive group $G$, we denote by $G_+$ and  $G_+^*$  the following  sets $G_+=\{x \in G \ ; \ x \geq 0\}$ and $G_+^*=\{x \in G \ ;  \ x>0\}$. The symbol $+\infty$ is supposed such that, for any $x \in G$, we have $x<+\infty $ and $x+(+\infty)=+\infty+x=+\infty$. Given a map $d$ defined on a product $X \times X$ and taking its values in $G_+$ or $G_+ \cup \{+\infty\}$, we say that $d$ satisfies the \emph{general metric rule} if , for every $x,y,z \in X$
\begin{enumerate}
\item $d(x,x)=0$
\item $d(x,y)=d(y,x)>0$ for $x \neq y$
\item $d(x,z) \leq d(x,y)+d(y,z)$
\end{enumerate}

Hence, a metric space is a structure  $(X,d)$ such that $X$ is a set and $d$ is a  a map $d:X \times X \rightarrow \Reel_+$ which satisfies the general metric rule (i.e. $d$ is a distance on $X$). 

\begin{definition}  Given a set $F$, a map $\delta:F \times F \rightarrow \Reel_+ \cup \{+\infty\}$ which satisfies the general metric rule is called a \emph{generalized distance} on $F$.
\end{definition}

If $\delta$ is a generalized distance on $F$, for each $(x,r) \in F \times (\Reel_+ \cup \{+\infty\})$, we can define  the open ball of center $x$ and radius $r$
$$B_\delta(x,r)=\{y \in F \ ; \ \delta(x,y)<r\}$$
The open balls of radius $+\infty$ are also the equivalence class for the relation
$\delta(x,y)<+\infty$. Each large open ball is clearly a metric
space for $\delta$ and is called \emph{a metric component of $F$ for
$\delta$}. Let $\mathcal{M}_F$ be the set of all metric components
of $F$ for $\delta$ and, for each $x \in F$, let $C_F(x)$ the
element $E \in \mathcal{M}_F$ such that $x \in E$.

If $\delta$ is a generalized distance on a set $F$,
then the family of open balls is a basis of a topology on $F$.
Furthermore, $F$ is the disjoint union of its metric components and
each metric component is an open set of $F$.
Reciprocally, if $(E_i,\delta_i)_{i \in I}$ is a family of disjoints
metric spaces, then $(E_i)_{i \in I}$ is the family of metric
components of $F=\bigcup_{i \in I}E_i$ for the generalized distance
$\delta$ defined by
$$\forall (x,y) \in F^2 \ \ \delta(x,y)=
\left\{\begin{array}{cl}
         \delta_i(x,y) & \text{if $\exists i \in I$ such that $x,y \in E_i$,} \\
         +\infty & \text{else.} \\
       \end{array}
\right.$$

Consequently, a set provided with a generalized distance is just the
disjoint union of a family of metric spaces. We want to improve this
concept by the consideration of a kind of metric on the set of
metric components.

Let $\KK$ be a fixed ordered extension field of $\Reel$.
Therefore, we have the ordered group $\Gal(\KK)$ whose elements are
the galaxies of $\KK$.

\begin{definition}
A \emph{galactic distance} on a set $\mathcal{E}$ is a map
$\Delta:\mathcal{E} \times \mathcal{E} \rightarrow \Gal(\KK)$ which satisfies the general metric rule.
\end{definition}

If $\Delta$ is a galactic distance on a set $\mathcal{E}$, there is
a well defined topology on $\mathcal{E}$ so that the family of open
balls is a basis of this topology.

\begin{definition}
A \emph{galactic space}  is a structure $(F,\delta,\Delta)$ in which
$F$ is a set, $\delta$ is a generalized distance on $F$ and $\Delta$
is a galactic distance on the set $\mathcal{M}_F$ of metric
components of $F$ for $\delta$.
\end{definition}

With the aim of simplifying the notations, we can also say that $F$ is a galactic space without mentioning $\delta$ and $\Delta$. In some way, a galactic space is a set with two levels of resolution: a fine resolution given by the generalized distance which relates the topological relations between points inside each metric component, a coarse resolution given by the galactic distance which relates the topological relations between the metric components. In spite of the chosen terminology, the reader must avoid to think that the structure of galactic space may have any application in the science of universe. 

\begin{definition}
An \emph{isometry} between two galactic spaces $(F,\delta,\Delta)$
and $(F',\delta',\Delta')$ is a bijective map $\phi:F \rightarrow
F'$ such that
\begin{enumerate}
\item $\forall (x,y) \in F^2 \ \
\delta'(\phi(x),\phi(y))=\delta(x,y)$
\item $\forall (G,H) \in \mathcal{M}_F^2 \ \
\Delta'(\phi(G),\phi(H))=\Delta(G,H)$
\end{enumerate}
\end{definition}

(It is clear that, if $\phi$ is bijective and satisfies the point (1.), then, for each metric component $G$ of $F$ for $\delta$ ,  the set $\phi(G)$ is a metric component of $F'$ for $\delta'$.)

\textbf{Example~1} A metric space $(E,d)$ is a particular case of
galactic space, that is to say the galactic space $(E,d,D)$ where
$D$ is the trivial galactic distance on the set $\{E\}$. This 
trivial example shows that the notion of galactic space is really a generalization of that of metric space.

\vskip 2mm

\textbf{Example~2} Let $(E_1,d_1)$ and $(E_2,d_2)$ two metric spaces
such that $E_1 \cap E_2=\emptyset$. We choose a galaxy $\Delta_{1,2}
\in \Gal(\KK)$ which is not trivial ($\Delta_{1,2} \neq 0 \in
\Gal(\KK)$). Then, it is easy to verify that there is a galactic
space $(E,d,D)$ such that
\begin{itemize}
\item $E=E_1 \cup E_2$~;
\item $\displaystyle d(x,y)=\left\{
 \begin{array}{ll}
   d_1(x,y) & \text{if $(x,y) \in {E_1}^2$} \\
   d_2(x,y) & \text{if $(x,y) \in {E_2}^2$} \\
   +\infty & \text{else} \\
 \end{array}\right.$~;
 \item the metric components of $E$ for $d$ are $E_1$ and $E_2$ and $D(E_1,E_2)=\Delta_{1,2}$.
\end{itemize}

\vskip 2mm

\textbf{Example~3} Let $(G,d)$ be a $\KK$\emph{-metric space}: that means
that $G$ is a set and $d$ is a map from $G \times G$ to $\KK_+$ which satisfies the general metric rule.
The simplest example of a $\KK$-metric space is $G=\KK$ and
$d(x,y)=|x-y|$ for each $(x,y) \in \KK^2$. Another example is
$G={^*\!E}$ and $d={^*\!d}$ where $(E,d)$ is a metric space,
$({^*\!E},{^*\!d})$ is a nonstandard extension of $(E,d)$ and
$\KK=\HR$.

Then, we consider the equivalence relation $\approx$ on $G$ defined
by
$$\forall (x,y) \in G^2 \ \ (x \approx y \Leftrightarrow d(x,y)
\simeq 0)$$ and the quotient set $F=G/\approx$. For every $x \in G$,
we denote $[x]$ the equivalence class of $x$ for $\approx$. On $F$
we have a generalized distance $\delta$ defined by
$$\forall (x,y) \in G^2 \ \ \delta([x],[y])=\text{pv}(d(x,y))$$

Finally, we define a galactic distance $\Delta$ on the set
$\mathcal{M}_F$ of metric components of $F$ for $\delta$ such that
$$\forall (x,y) \in G^2 \ \ \Delta(C_F(x),C_F(y))=\Gal(d(x,y))$$
where $C_F(t)$ denotes the metric component of $t \in F$. Then
$(F,\delta,\Delta)$ is a galactic space. We say that
$(F,\delta,\Delta)$ is the \emph{galactic projection} of the
$\KK$-metric space $(G,d)$.

\vskip 2mm

The following result shows that the preceding example is universal.

\begin{theoreme}
Every galactic space is the galactic projection of a $\KK$-metric
space.
\end{theoreme}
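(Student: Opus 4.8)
The plan is to exhibit the given galactic space $(F,\delta,\Delta)$ as the galactic projection of a $\KK$-metric structure carried by the set $F$ itself. Inside each metric component $E\in\mathcal{M}_F$ one simply re-uses $\delta$, whose restriction to $E$ is an honest real-valued metric and hence takes values in $\KK_+$; between two distinct components $E$ and $E'$ one routes through chosen base points and adds an infinitely large term whose galaxy is exactly $\Delta(E,E')$. The only point that is not pure bookkeeping is the existence of such a term \emph{coherently}, so I would isolate that first.

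The key observation is that the canonical projection $\Gal:\KK\to\Gal(\KK)$ admits a section which is a homomorphism of groups. Indeed $\Gal(0)$ is a divisible subgroup of the abelian group $\KK$ (if $s$ is limited then so is $s/n$ for every $n\in\Naturel^*$), hence a direct summand, so there is a group homomorphism $\sigma:\Gal(\KK)\to\KK$ with $\Gal\circ\sigma=\mathrm{id}$. Such a $\sigma$ is automatically order-preserving: if $h>0$ in $\Gal(\KK)$ then $\Gal(\sigma(h))=h>0$, which by the very definition of the order on $\Gal(\KK)$ forces $\sigma(h)\geq 0$, while $\sigma(h)\notin\Gal(0)$ forbids $\sigma(h)=0$; and $\sigma(0)=0$. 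Setting $\lambda:=\sigma\circ\Delta$ on $\mathcal{M}_F\times\mathcal{M}_F$ one obtains a map into $\KK_+$ with $\lambda(E,E)=0$, $\lambda(E,E')=\lambda(E',E)$, with $\lambda(E,E')$ positive and infinitely large for $E\neq E'$, with $\Gal(\lambda(E,E'))=\Delta(E,E')$, and — crucially — with the genuine triangle inequality $\lambda(E,E'')\leq\lambda(E,E')+\lambda(E',E'')$ in $\KK$, since additivity of $\sigma$ turns the triangle inequality of $\Delta$ into an equality after applying $\sigma$ and monotonicity then finishes it.

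Next, fixing a base point $o_E\in E$ for each $E\in\mathcal{M}_F$, I would define $d:F\times F\to\KK_+$ by $d(x,y)=\delta(x,y)$ when $x$ and $y$ lie in the same metric component, and $d(x,y)=\delta(x,o_E)+\lambda(E,E')+\delta(o_{E'},y)$ when $E=C_F(x)\neq C_F(y)=E'$, and check the general metric rule. Symmetry and positivity are immediate; for the triangle inequality $d(x,z)\leq d(x,y)+d(y,z)$ one does a short case analysis on the number of distinct components among $x,y,z$. When at least two of the three points share a component the inequality reduces either to the triangle inequality of $\delta$ inside that component, or to the trivial domination of a limited real by a sum containing the infinitely large term $2\lambda(\cdot,\cdot)$; the only case genuinely using the lift is that of three pairwise distinct components, where, after cancelling the common endpoint terms, the required bound is $\lambda(E,E'')\leq\lambda(E,E')+\lambda(E',E'')+2\delta(o_{E'},y)$, which follows from the lifted triangle inequality together with $\delta\geq 0$.

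Finally I would compute the galactic projection of the $\KK$-metric space $(F,d)$ and see that it returns $(F,\delta,\Delta)$. The relation $\approx$ is trivial: if $d(x,y)\simeq 0$ then $x$ and $y$ must lie in the same component (otherwise $d(x,y)\geq\lambda(E,E')$ is infinitely large), whence $d(x,y)=\delta(x,y)$ is a real number $\simeq 0$, i.e.\ $x=y$; so $F/{\approx}$ identifies with $F$. Moreover $\text{pv}(d(x,y))=\delta(x,y)$ in all cases (it equals $+\infty$ exactly when the components differ), so the induced generalized distance is $\delta$ and the family of metric components is unchanged; and $\Gal(d(x,y))=0=\Delta(E,E)$ when the components agree, while $\Gal(d(x,y))=\Gal(\lambda(E,E'))=\Delta(E,E')$ otherwise because $\delta(x,o_E)$ and $\delta(o_{E'},y)$ are limited. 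Hence the galactic projection of $(F,d)$ is exactly $(F,\delta,\Delta)$. The main obstacle is the splitting observation of the second step; once it is available the rest is routine verification.
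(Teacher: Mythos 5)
Your proof is correct, and the overall construction --- honest real-valued distances inside each metric component, plus routing through chosen base points with an infinitely large cross term whose galaxy is $\Delta(E,E')$ --- is the same as the paper's. The genuine difference lies in how the cross terms are chosen, and it is the most interesting part of your argument. The paper picks, for each pair of components, an arbitrary symmetric representative $d_{ij}\in\Delta(E_i,E_j)$ and then asserts, in the three-component case, that $r_{ik}\le r_{ij}+r_{jk}$ holds for \emph{any} representatives $r_{ik}\in\Delta(E_i,E_k)$, $r_{ij}\in\Delta(E_i,E_j)$, $r_{jk}\in\Delta(E_j,E_k)$; that assertion is valid only when the galactic triangle inequality is strict (so that the difference $r_{ij}+r_{jk}-r_{ik}$ is infinitely large), and it can fail when $\Delta(E_i,E_k)=\Delta(E_i,E_j)+\Delta(E_j,E_k)$ --- for instance $r_{ij}+r_{jk}+1$ is an admissible representative of $\Delta(E_i,E_k)$ that violates the inequality. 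Your homomorphic section $\sigma$ of $\Gal$ --- obtained from the divisibility of $\Gal(0)$ (hence its being a direct summand of the additive group of $\KK$), with order-preservation then automatic --- makes the representatives $\lambda(E,E')=\sigma(\Delta(E,E'))$ coherent, converts the galactic triangle inequality into an honest inequality in $\KK$ by additivity and monotonicity of $\sigma$, and so closes that gap, at the cost of one application of Zorn's lemma (which the paper's simultaneous choices of $d_{ij}$ and $x_i$ already require anyway). The remainder of your verification --- the case analysis for the general metric rule and the computation showing that the galactic projection of $(F,d)$ returns $(F,\delta,\Delta)$ --- matches the paper's proof step for step.
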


\begin{proof}
We consider a galactic space $(F,\delta,\Delta)$ and let
$\mathcal{M}_F$ be the set of metric components of this space. For
each $(E_i,E_j) \in \mathcal{M}_F^2$ such that $E_i \neq E_j$, we
choose an element $d_{ij}$ in the galaxy $\Delta(E_i,E_j)$ in such a
way that $d_{ij}=d_{ji}$. Thus, we have $d_{ij} \simeq +\infty$ in
$\KK$ and $\Gal(d_{ij})=\Delta(E_i,E_j)$. In the same way, we choose
an element $x_i$ in each metric component $E_i$. Then, we define a
map $d:F^2 \rightarrow \KK_+$ such that, for each $(x,y) \in F^2$
$$d(x,y)=
 \left\{
  \begin{array}{ll}
    \delta(x,y) & \text{if $\delta(x,y)<+\infty$} \\
    \delta(x,x_i)+d_{ij}+\delta(x_j,y) &
    \text{if $\exists (E_i,E_j) \in \mathcal{M}_F^2 \ \ E_i \neq E_j \ \ (x,y) \in E_i \times E_j$}.\\
  \end{array}
 \right.
$$

We see at once that $d$ is symmetrical. Since $d(x,y) \simeq
+\infty$ for every $(x,y) \in E_i \times E_j$ such that $E_i \neq
E_j$, we have
$$\forall (x,y) \in F^2 \ (d(x,y)=0 \Longleftrightarrow x=y)$$
It remains to prove the triangular inequality $d(x,z) \leq
d(x,y)+d(y,z)$ for each $(x,y,z)$ dans $F^3$. If $x,y,z$ belong to
the same metric component, there is no problem. Thus, we have to
consider four cases.

\vskip 2mm

\emph{Case 1}: $x \in E_i$ and $y,z \in E_j$ with $E_i \neq E_j$ in
$\mathcal{M}_F$.

Then, $d(x,z)=\delta(x,x_i)+d_{ij}+\delta(x_j,z)$,
$d(x,y)=\delta(x,x_i)+d_{ij}+\delta(x_j,y)$ and
$d(y,z)=\delta(y,z)$. Hence, the result come from $\delta(x_j,z)
\leq \delta (x_j),y) +\delta(y,z)$.

\vskip 2mm

\emph{Case 2}: $x,y \in E_i$ and $z \in E_j$ with $E_i \neq E_j$ in
$\mathcal{M}_F$.

Then, $d(x,z)=\delta(x,x_i)+d_{ij}+\delta(x_j,z)$,
$d(x,y)=\delta(x,y)$ and
$d(y,z)=\delta(y,x_i)+d_{ij}+\delta(x_j,z)$. Hence, the result comes
from $\delta(x,x_i) \leq \delta(x,y)+\delta(y,x_i)$.

\vskip 2mm

\emph{Case 3}: $x,z \in E_i$ and $y \in E_j$ with $E_i \neq E_j$ in
$\mathcal{M}_F$.

Then, $d(x,z)=\delta(x,z)$,
$d(x,y)=\delta(x,x_i)+d_{ij}+\delta(x_j,y)$ and
$d(y,z)=\delta(y,x_j)+d_{ji}+\delta(x_i,z)$. Since $\delta(x,z) \in
\Reel_+$, $d(x,y) \simeq +\infty$ and $d(y,z) \simeq +\infty$, we
get the result.

\vskip 2mm

\emph{Case 4}: $x \in E_i$, $y \in E_j$ and $z \in E_k$ with $E_i
\neq E_j$, $E_i \neq E_k$ and $E_j \neq E_k$ in $\mathcal{M}_F$.

Then, $d(x,z)=\delta(x,x_i)+d_{ik}+\delta(x_k,z)$,
$d(x,y)=\delta(x,x_i)+d_{ij}+\delta(x_j,y)$ and
$d(y,z)=\delta(y,x_j)+d_{jk}+\delta(x_k,z)$. Since $\Delta(E_i,E_k)
\leq \Delta(E_i,E_j)+\Delta(E_j,E_k)$ in $\Gal(\KK)=\KK/\Gal(0)$, we
have $r_{ik} \leq r_{ij}+r_{j,k}$ for every $r_{ik} \in
\Delta(E_i,E_k)$, $r_{ij} \in \Delta(E_i,E_j)$ and $r_{jk} \in
\Delta(E_j,E_k)$. Hence, the result comes from $d(x,z) \in
\Delta(E_i,E_k)$, $d(x,y) \in \Delta(E_i,E_j)$ and $d(y,z) \in
\Delta(E_j,E_k)$.

\vskip 2mm
 
Now, we know that $(F,d)$ is a $\KK$-metric space. It is easy to
check that the galactic projection of $(F,d)$ is isomorphic to
$(F,\delta,\Delta)$. 

\end{proof}

From the preceding result, we could hastily conclude that the study of galactic spaces may be advantageously replaced by the study of $\KK$-metric spaces. On the contrary, we think that galactic spaces are interesting because they have a rich metric structure which is a suitable framework for scaling. In the next section, we will introduce a notion of contraction which naturally operates on the class of galactic spaces.

\vskip 2mm

\textbf{Example~4} Let us call \emph{galactic continuous line}  the galactic space $D_{c,\KK}$ which is the galactic projection of $\KK$ view as a $\KK$-metric space. Thus, the set $D_{c,\KK}$ is equal to $\Hal(\KK)=\KK/\Hal(0)$, the generalized distance is given by  $$\forall (x,y) \in \KK \  \  \delta(\Hal(x),\Hal(y))=\text{vp}(|x-y|),$$
for each $x \in \KK$ the metric component of $\Hal(x)$ is $G(x):=\{\Hal(y) \ ; \ y \in \Gal(x)\}$, the set of metric components is the quotient group of $\Hal(\KK)$ by the subgroup $\Gal(0)/\Hal(0)$ (quotient which is canonically isomorphic to $\Gal(\KK)$)  and the galactic distance is given by
$$\forall (x,y) \in \KK^2 \ \ \Delta(G(x),G(y))=\Gal(|x-y|).$$
Since the principal value map is bijective if considered as a map from $\Gal(0)/\Hal(0)$ to $\Reel$ and since $G(x)=\Hal(x)+G(0)$, we see that each metric component of $D_{c,\KK}$ is a metric space isometric to $\Reel$ and $D_{c,\KK}$ is the disjoint union of a family $(\Reel_C)_{C \in \Gal(\KK)}$ of copies of $\Reel$.

In the same way, we define a \emph{galactic discrete line} $D_{d,\KK}$ to be the disjoint union of a family $(\Relatif_C)_{C \in \Gal(\KK)}$ where each $\Relatif_C$ is a copy of the set $\Relatif$ of integers. On $D_{d,\KK}$, we consider a generalized distance $\delta$ such that
$$\forall x,y  \in D_{d,\KK} \ \ \delta(x,y)=
\left\{
\begin{array}{ll}
 |x-y| & \text{if $\exists C \in \Gal(\KK)$ such that $(x,y) \in C^2$}  \\
  + \infty & \text{else} \\
\end{array}
\right.
$$
The metric components of $D_{d,\KK}$ for $\delta$ are the sets $\Relatif_C$ for $C \in \Gal(\KK)$ and the galactic distance is simply defined by 
$$\forall C,C'  \in \Gal(\KK) \ \ \Delta(\Relatif_C,\Relatif_{C'})=|C-C'|.$$
The discrete galactic line is the galactic space $(D_{d,\KK},\delta,\Delta)$.

\section{Contraction of a galactic space}

\textbf{4.1}  Before going further, we need some considerations on the
multiplication of a galaxy by a number. Let $\gamma \in \KK_+^*$
be a limited number, $\tau \in \Gal(\KK)$ and $t \in \KK$ such that $\tau=\text{Gal}(t)$. As usual, we define the set $\gamma.\tau=\{\gamma s \ ; \ s \in \tau\}$ so that
$\gamma.\tau=\gamma t+\gamma \Gal(0)$.

Thus, $\gamma.\tau=\text{Gal}(\gamma t)$ if $\gamma$ is appreciable
and $\gamma.\tau \subset \text{Hal}(\gamma t) \subset
\text{Gal}(\gamma t)$ if $\gamma \simeq 0$. Then, we define $\gamma \bullet \tau$ in $\Gal(\KK)$ by $\gamma
\bullet \tau:=\text{Gal}(\gamma t)$; hence, we always have $\gamma.\tau \subset \gamma
\bullet \tau$.

Moreover, if $\tau \in \Gal(\KK)_+^*$, the principal value map
is constant on the set $\gamma.\tau$ and this constant value is
named $\text{pv}(\gamma.\tau)$. Indeed, such a $\tau$ can be written
$\text{Gal}(t)$ for some $t \in \KK_+$ such that $t \simeq
+\infty$; therefore  every element of $\gamma.\tau$ is infinitely
large if $\gamma \not\simeq 0$ and $\gamma.\tau \subset
\text{Hal}(\gamma t)$ if $\gamma \simeq 0$.

\vskip 3mm

\textbf{4.2}  Let $(F,\delta,\Delta)$ and $(F',\delta',\Delta')$ be two galactic
spaces and let $f$ be a map $F \rightarrow F'$. We suppose that $f$
satisfies a Lipschitz condition for $(\delta,\delta')$, that is to
say, there is a limited element  $\gamma >0$ of $\KK$ such that
$$\forall (x_1,x_2) \in F^2 \ \
\delta'(f(x_1),f(x_2)) \leq \gamma \,\delta(x_1,x_2)$$ 
When one of its members is $+\infty$, this inequality
must be interpreted according to the following usual rule:  $\forall \alpha \in \KK \ \ \alpha \leq +\infty \ \ \text{and} \ \ \alpha (+\infty)=+\infty $, . If $f$ satisfies such a condition, then
$$\forall (x_1,x_2) \in F^2\ \ (\delta(x_1,x_2)<+\infty \ \Longrightarrow \
\delta'(f(x_1),f(x_2))<+\infty)$$ 
Thus, for every metric component $E$ of
$F$ for $\delta$, the direct image $f(E)$ is a subset of a metric component of
$F'$ for $\delta'$ which we denote $f[E]$. Hence, we get a map between the sets of metric components
$$\begin{array}{ccccc}
    \widetilde{f} & : & \mathcal{M}_F & \longrightarrow & \mathcal{M}_{F'} \\
      &   & E & \longmapsto & f[E] \\
  \end{array}
$$

\begin{definition}
Given two galactic spaces $(F,\delta,\Delta)$ and
$(F',\delta',\Delta')$, a \emph{morphism} from $(F,\delta,\Delta)$
to $(F',\delta',\Delta')$ is a map $f:F \rightarrow F'$ such that
there is a limited element $\gamma>0$ in $\KK$ so that the following Lipschitz conditions are satisfied
\begin{enumerate}
\item $\forall (x_1,x_2) \in F^2 \ \
\delta'(f(x_1),f(x_2)) \leq \gamma\,\delta(x_1,x_2)$;
\item $\forall (E_1,E_2) \in (\mathcal{M}_F)^2 \ \
\Delta'(\widetilde{f}(E_1),\widetilde{f}(E_2)) \leq \gamma \bullet
\Delta(E_1,E_2)$;
\end{enumerate}
The element $\gamma$ is called the Lipschitz constant of $f$.
An \emph{isomorphism} from $(F,\delta,\Delta)$ to
$(F',\delta',\Delta')$ is a morphism $f:(F,\delta,\Delta)
\rightarrow (F',\delta',\Delta')$ such that $f:F \rightarrow F'$ is
bijective and $f^{-1}$ is a morphism from $(F',\delta',\Delta')$ to
$(F,\delta,\Delta)$.
\end{definition}

We remark that if $f:(F,\delta,\Delta) \rightarrow
(F',\delta',\Delta')$ is an isomorphism, then $f[E]=f(E)$ for each
$E \in \mathcal{M}_F$. An isometry is a particular case of
isomorphism between two galactic spaces.

Hence, we have defined a category $\mathcal{G}_{\KK}$ whose
objects are the galactic spaces provided with the morphisms defined
just above. There is a class of morphisms which is particularly interesting for
the study of scalings.

\begin{definition}
Given a limited element $\gamma>0$ in $\KK$,  a
\emph{$\gamma$-contraction} of $(F,\delta,\Delta)$ is a morphism of
galactic spaces $f:(F,\delta,\Delta) \rightarrow
(F',\delta',\Delta')$ such that $f$ is a surjective map from $F$ to $F'$ and, $\forall (x_1,x_2) \in F^2$ $\forall
(E_1,E_2) \in \mathcal{M}_F^2$
\begin{align*}
\delta'(f(x_1),f(x_2)) & =
 \left\{\begin{array}{ll}
     \text{\rm pv}(\gamma\,\delta(x_1,x_2)) & \text{if $\delta(x_1,x_2)<+\infty$} \\
     \text{\rm pv}(\gamma.\Delta(C_F(x_1),C_F(x_2))) & \text{if $\delta(x_1,x_2)=+\infty$} \\
   \end{array}
 \right. \\
\Delta'(f[E_1],f[E_2]) & = \gamma \bullet \Delta(E_1,E_2)
\end{align*}
The element $\gamma$ is called the \emph{coefficient} of the contraction $f$.
\end{definition}

We also say that a galactic space $(F',\delta',\Delta')$ is a
$\gamma$-contraction of $(F,\delta,\Delta)$ if there exists a
morphism $f:(F,\delta,\Delta) \rightarrow (F',\delta',\Delta')$
which is a $\gamma$-contraction. We notice that a 1-contraction is an  isometry.
 It is easy to check that, if a surjective map $f:F \rightarrow F'$ satisfies the two last conditions of the preceding definition, then $f$ is necessary a morphism from $(F,\delta,\Delta)$ to $(F',\delta',\Delta')$. We point out that the coefficient $\gamma$ of a contraction may be greater that 1, but not to much.
 
\vskip 3mm

\textbf{4.3} Let us consider the first properties of contractions.
 
\begin{proposition} 
Let us consider a limited element $\gamma>0$ in $\KK$ and a $\gamma$-contraction $f_\gamma:(F,\delta,\Delta) \rightarrow (F_\gamma,\delta_\gamma,\Delta_\gamma)$.
\begin{enumerate}
\item If $\gamma \not\simeq 0$, then the map $f_\gamma:F \rightarrow F_\gamma$ is a bijection and, for every $x,y \in F$
\begin{align*}
\delta_\gamma(f_\gamma(x),f_\gamma(y)) & = \text{pv}(\gamma)\,\delta(x,y) \\
\Delta_\gamma(C_{F_\gamma}(f_\gamma(x)),C_{F_\gamma}(f_\gamma(y))) & =\gamma\bullet\Delta(C_F(x),C_F(y))
\end{align*}
\item If $\gamma \simeq 0$, then the map $f_\gamma$ is a surjective map such that, for every $u \in F$, we have 
$f_\gamma^{-1}(f_\gamma(u))=\{v \in F \ ; \  \gamma.\Delta(C_F(u),C_F(v)) \subset
\text{\rm Hal}(0)\}$; furthermore, for every $x,y \in F$
\begin{align*}
\delta_\gamma(f_\gamma(x),f_\gamma(y)) & =\text{pv}(\gamma.\Delta(C_F(x),C_F(y))) \\
\Delta_\gamma(C_{F_\gamma}(f_\gamma(x)),C_{F_\gamma}(f_\gamma(y))) & =\gamma\bullet\Delta(C_F(x),C_F(y))
\end{align*}
\end{enumerate}
\end{proposition}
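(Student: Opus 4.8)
The plan is to unravel the definition of a $\gamma$-contraction in the two regimes $\gamma\not\simeq 0$ and $\gamma\simeq 0$, using the distinction already made in Section~4.1 between $\gamma.\tau$ and $\gamma\bullet\tau$, together with the behaviour of $\mathrm{pv}$ on products. In both cases the formula for $\Delta_\gamma$ on metric components is immediate: it is literally the second clause of the definition of a $\gamma$-contraction, once we observe that $C_{F_\gamma}(f_\gamma(x))=f_\gamma[C_F(x)]$, which follows from the definition of $f_\gamma[E]$ and the fact that $f_\gamma(x)\in f_\gamma[C_F(x)]$. So the work is entirely about $\delta_\gamma$ and about the fibre structure of $f_\gamma$.

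For part~(1), assume $\gamma\not\simeq 0$, i.e.\ $\gamma$ is appreciable. First I would show $f_\gamma$ is injective: if $f_\gamma(x)=f_\gamma(y)$ then $\delta_\gamma(f_\gamma(x),f_\gamma(y))=0$, and I must derive $x=y$. If $\delta(x,y)<+\infty$, the first clause gives $\mathrm{pv}(\gamma\,\delta(x,y))=0$; since $\gamma$ is appreciable and $\delta(x,y)\in\Reel_+$, the product $\gamma\,\delta(x,y)$ is a limited element whose principal value is $0$ only when $\delta(x,y)=0$, whence $x=y$. If instead $\delta(x,y)=+\infty$, then $C_F(x)\neq C_F(y)$, so $\Delta(C_F(x),C_F(y))\in\Gal(\KK)_+^*$, and $\gamma.\Delta(C_F(x),C_F(y))$ consists of infinitely large elements (Section~4.1, since $\gamma\not\simeq 0$), so its principal value is $+\infty\neq 0$ — contradiction. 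Hence $f_\gamma$ is injective, and being surjective by hypothesis, it is a bijection. The distance formula then reads off the two clauses: when $\delta(x,y)<+\infty$ we use $\mathrm{pv}(\gamma\,\delta(x,y))=\mathrm{pv}(\gamma)\,\delta(x,y)$ (valid because $\gamma$ is limited and $\delta(x,y)$ real, so $\gamma\simeq\mathrm{pv}(\gamma)$ forces $\gamma\,\delta(x,y)\simeq\mathrm{pv}(\gamma)\,\delta(x,y)$); when $\delta(x,y)=+\infty$ both sides are $+\infty$ — the right-hand side because $\mathrm{pv}(\gamma)>0$ and $\delta(x,y)=+\infty$, the left-hand side because $\gamma.\Delta(C_F(x),C_F(y))$ is a set of infinitely large elements. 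This matches the single formula $\delta_\gamma(f_\gamma(x),f_\gamma(y))=\mathrm{pv}(\gamma)\,\delta(x,y)$.

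For part~(2), assume $\gamma\simeq 0$. The map $f_\gamma$ is surjective by hypothesis; the point is to identify the fibres. I compute $f_\gamma(u)=f_\gamma(v)$ iff $\delta_\gamma(f_\gamma(u),f_\gamma(v))=0$. Splitting on whether $C_F(u)=C_F(v)$: if they are equal, then $\Delta(C_F(u),C_F(v))=0$, so $\gamma.\Delta(C_F(u),C_F(v))=\{0\}\subset\mathrm{Hal}(0)$, and we must check $\delta_\gamma(f_\gamma(u),f_\gamma(v))=0$ — this holds because $\delta(u,v)<+\infty$ and $\gamma\,\delta(u,v)\simeq 0$ (product of an infinitesimal and a real), so its $\mathrm{pv}$ is $0$; so all such $v$ lie in the fibre, consistent with the claimed description. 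If $C_F(u)\neq C_F(v)$, then $\delta(u,v)=+\infty$ and the relevant clause gives $\delta_\gamma(f_\gamma(u),f_\gamma(v))=\mathrm{pv}(\gamma.\Delta(C_F(u),C_F(v)))$, which is $0$ exactly when $\gamma.\Delta(C_F(u),C_F(v))\subset\mathrm{Hal}(0)$ (by the discussion of $\mathrm{pv}(\gamma.\tau)$ in Section~4.1: for $\tau\in\Gal(\KK)_+^*$, $\gamma.\tau$ is either a set of infinitely large elements or is contained in some $\mathrm{Hal}(\gamma t)$; it has $\mathrm{pv}$ equal to $0$ precisely in the latter case with $\gamma t\simeq 0$, i.e.\ $\gamma.\tau\subset\mathrm{Hal}(0)$). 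Combining the two cases gives exactly $f_\gamma^{-1}(f_\gamma(u))=\{v\in F\ ;\ \gamma.\Delta(C_F(u),C_F(v))\subset\mathrm{Hal}(0)\}$. Finally the distance formula: when $\delta(x,y)<+\infty$ we have $\delta_\gamma(f_\gamma(x),f_\gamma(y))=\mathrm{pv}(\gamma\,\delta(x,y))=0$, and this agrees with $\mathrm{pv}(\gamma.\Delta(C_F(x),C_F(y)))=\mathrm{pv}(\gamma.\{0\})=0$ since in this case $C_F(x)=C_F(y)$; when $\delta(x,y)=+\infty$ the claimed formula is literally the second clause of the contraction definition. So the unified statement $\delta_\gamma(f_\gamma(x),f_\gamma(y))=\mathrm{pv}(\gamma.\Delta(C_F(x),C_F(y)))$ holds.

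The only genuinely delicate point — and the one I would write out most carefully — is the fibre computation in part~(2), specifically making precise that for $\tau\in\Gal(\KK)_+^*$ and $\gamma\simeq 0$ we have $\mathrm{pv}(\gamma.\tau)=0$ iff $\gamma.\tau\subset\mathrm{Hal}(0)$, and checking that this is a well-defined condition on the galaxies $C_F(u),C_F(v)$ rather than on chosen representatives (it is, because $\gamma.\tau$ depends only on $\tau$ as a subset of $\KK$, and $\tau=\Delta(C_F(u),C_F(v))$ is determined by the metric components). Everything else is a direct substitution into the definition of $\gamma$-contraction combined with the elementary arithmetic of $\mathrm{pv}$ on limited elements already recorded in Sections~2.2 and~4.1.
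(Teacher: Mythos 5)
Your proof is correct and is exactly the direct verification from the definitions that the paper dismisses as ``Straightforward'': the case split on $\delta(x,y)<+\infty$ versus $\delta(x,y)=+\infty$, the use of $\mathrm{pv}(\gamma.\tau)$ from Section~4.1, and the identification $C_{F_\gamma}(f_\gamma(x))=f_\gamma[C_F(x)]$ are all what is needed. One tiny imprecision: when $C_F(u)=C_F(v)$ the galaxy $\Delta(C_F(u),C_F(v))=\Gal(0)$ is, as a subset of $\KK$, the set of limited elements rather than $\{0\}$, so $\gamma.\Delta(C_F(u),C_F(v))$ equals $\gamma\,\Gal(0)$ and not $\{0\}$ --- but for $\gamma\simeq 0$ this set is still contained in $\Hal(0)$, so your conclusion is unaffected.
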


\begin{proof}
Straightforward.
\end{proof}

\begin{theoreme}
Let $(F,\delta,\Delta)$ be a galactic space and a limited element $\gamma>0$ in $\KK$. Then, there is a
$\gamma$-contraction $f_\gamma:(F,\delta,\Delta) \rightarrow
(F_\gamma,\delta_\gamma,\Delta_\gamma)$ of $(F,\delta,\Delta)$.
\end{theoreme}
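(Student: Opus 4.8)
The plan is to reduce to the universal example via Theorem~1 and then simply rescale the underlying $\KK$-metric. By Theorem~1 we may assume that $(F,\delta,\Delta)$ is the galactic projection of a $\KK$-metric space $(G,d)$, so that $F=G/\!\approx$ with $x\approx y\Leftrightarrow d(x,y)\simeq 0$, while $\delta([x],[y])=\text{pv}(d(x,y))$ and $\Delta(C_F(x),C_F(y))=\Gal(d(x,y))$; this is harmless because the isomorphism supplied by Theorem~1 is in fact an isometry, and composing an isometry with a $\gamma$-contraction again yields a $\gamma$-contraction. First I would form the pair $(G,\gamma d)$: since $\gamma>0$ in $\KK$, the map $\gamma d:G\times G\to\KK_+$ still satisfies the general metric rule, so $(G,\gamma d)$ is a $\KK$-metric space, and I let $(F_\gamma,\delta_\gamma,\Delta_\gamma)$ be its galactic projection.

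Next, because $\gamma$ is \emph{limited}, $d(x,y)\simeq 0$ forces $\gamma d(x,y)\simeq 0$; hence the relation $\approx$ attached to $d$ refines the relation $\approx_\gamma$ attached to $\gamma d$, and the assignment $[x]\mapsto[x]_\gamma$ is a well-defined surjection $f_\gamma:F\to F_\gamma$, with $\widetilde{f_\gamma}(C_F(x))=C_{F_\gamma}(f_\gamma(x))$ since $f_\gamma(x)\in f_\gamma(C_F(x))$ and metric components are disjoint. It then remains to verify the two defining equalities of a $\gamma$-contraction; by the remark following the definition this automatically makes $f_\gamma$ a morphism, and surjectivity is already in hand. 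The galactic part is immediate from §4.1: $\Delta_\gamma(C_{F_\gamma}(f_\gamma(x)),C_{F_\gamma}(f_\gamma(y)))=\Gal(\gamma d(x,y))=\gamma\bullet\Gal(d(x,y))=\gamma\bullet\Delta(C_F(x),C_F(y))$. For the generalized-distance part I split into cases. If $\delta(x,y)<+\infty$ then $d(x,y)$ is limited with shadow $\delta(x,y)$, hence $\gamma d(x,y)$ is limited and $\delta_\gamma(f_\gamma(x),f_\gamma(y))=\text{pv}(\gamma d(x,y))=\text{pv}(\gamma)\,\delta(x,y)=\text{pv}(\gamma\,\delta(x,y))$, using that $\text{pv}$ is multiplicative on limited elements (the companion of the additivity noted in §2.2). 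If $\delta(x,y)=+\infty$ then $\Delta(C_F(x),C_F(y))=\Gal(d(x,y))$ lies in $\Gal(\KK)_+^*$, $\text{pv}$ is constant on $\gamma.\Gal(d(x,y))$ (cf. §4.1), and $\gamma d(x,y)$ is one of its elements, so $\delta_\gamma(f_\gamma(x),f_\gamma(y))=\text{pv}(\gamma d(x,y))=\text{pv}(\gamma.\Delta(C_F(x),C_F(y)))$. Independence from the chosen representatives $x,y\in G$ is routine, since $|d(x,y)-d(x',y)|\le d(x,x')$ and $\gamma$ is limited.

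The only step that needs genuine care — and where I expect the single real obstacle to lie — is the first computation above: one must legitimately pull $\text{pv}$ through the product $\gamma\cdot d(x,y)$ and then identify $\text{pv}(\gamma)\,\delta(x,y)$ with $\text{pv}(\gamma\,\delta(x,y))$. This rests precisely on the two facts that $\delta(x,y)$ is the shadow of $d(x,y)$ and that the standard-part (principal-value) map restricted to the ring of limited elements is a ring homomorphism; everything else in the argument is bookkeeping with the definitions of galactic projection, of $\gamma.\tau$ and $\gamma\bullet\tau$, and of $\gamma$-contraction.
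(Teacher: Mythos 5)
Your proof is correct, but it takes a different route from the paper's. The paper constructs the contraction directly on $F$: it introduces the equivalence relation $x \sim_\gamma y$ (namely $\gamma\,\delta(x,y)\simeq 0$ in the finite case, $\gamma.\Delta(C_F(x),C_F(y))\subset\Hal(0)$ in the infinite case), sets $F_\gamma=F/\sim_\gamma$, and then \emph{defines} $\delta_\gamma$ and $\Delta_\gamma$ by the very formulas appearing in the definition of a $\gamma$-contraction, asserting that these are ``clearly'' a generalized and a galactic distance. You instead lift $(F,\delta,\Delta)$ to a $\KK$-metric space $(G,d)$ via Theorem~1, rescale to $(G,\gamma d)$, and take the galactic projection. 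The two constructions produce the same quotient (your $\approx_\gamma$ on $G$ descends to exactly the paper's $\sim_\gamma$ on $F$), but the bookkeeping is organized differently, and each choice buys something. Your route automatically inherits well-definedness and the general metric rule for $\delta_\gamma$ and $\Delta_\gamma$ from the fact that $\gamma d$ is a genuine $\KK$-metric — in particular the triangle inequality across the finite/infinite boundary, which is the one point the paper's ``clearly'' quietly absorbs — at the cost of invoking Theorem~1, whose proof requires choosing representatives $d_{ij}$ and base points $x_i$ and so makes the construction less canonical than the paper's choice-free quotient. Your verification of the two contraction identities is sound: the finite case uses multiplicativity of $\text{pv}$ on $\Gal(0)$ (which the paper states only for sums, but which follows by the same two-line computation), and the infinite case correctly appeals to the constancy of $\text{pv}$ on $\gamma.\tau$ for $\tau\in\Gal(\KK)_+^*$ established in \S4.1. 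The only point worth making explicit is the preliminary reduction: you should check (as you assert) that the isomorphism of Theorem~1 is an isometry — it is, since the relation $\approx$ attached to the constructed $d$ is trivial on $F$ — and that precomposing a $\gamma$-contraction with an isometry again yields a $\gamma$-contraction; both are immediate from the definitions.
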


\begin{proof}
Let us consider the equivalence relation $\sim_\gamma$ on $F$ such
that, for all $(x,y) \in F^2$
$$x \sim_\gamma y \Longleftrightarrow \left\{
\begin{array}{ll}
  \gamma\,\delta(x,y) \simeq 0 & \text{if $\delta(x,y) < +\infty$} \\
  \gamma.\Delta(C_F(x)),C_F(y))\subset \text{Hal}(0) & \text{if $\delta(x,y)= +\infty$} \\
\end{array}\right.
$$
Let $F_\gamma$ be the quotient set $F/\sim_\gamma$ and $f_\gamma:F
\rightarrow F_\gamma$ be the canonical projection. Then, we define a
map $\delta_\gamma:F_\gamma \times F_\gamma \rightarrow \Reel$ so
that, for all $(x,y) \in F^2$
$$
\delta_\gamma(f_\gamma(x),f_\gamma(y))=
 \left\{
\begin{array}{ll}
  \text{pv}(\gamma\,\delta(x,y)) & \text{if $\delta(x,y) < +\infty$} \\
  \text{pv}(\gamma.\Delta(C_F(x),C_F(y))) & \text{if $\delta(x,y) =+\infty$} \\
\end{array}\right.
$$
which is clearly a generalized distance on $F_\gamma$. In a similar way, we define a galactic distance
$\Delta_\gamma$ on the set $\mathcal{M}_{F_\gamma}$ of metric components of $F_\gamma$ for $\delta_\gamma$ such that
$$\forall (x,y) \in F^2 \ \
\Delta_\gamma(C_{F_\gamma}(f_\gamma(x)),C_{F_\gamma}(f_\gamma(y)))=\gamma
\bullet \Delta(C_F(x),C_F(y))$$ Then $f_\gamma:(F,\delta,\Delta)
\rightarrow (F_\gamma,\delta_\gamma,\Delta_\gamma)$ is clearly a
$\gamma$-contraction of $(F,\delta,\Delta)$. 
\end{proof}

This proof shows that, given a galactic space $F$ and $\gamma$, the construction of a $\gamma$-contraction of $F$ is obtained by a relatively explicit procedure of quotient. 

\vskip 3mm

\textbf{4.4}
\emph{Example: contraction of the continuous and the discret galactic lines.} Given the continuous galactic line $D_{c,\KK}=(\Hal(\KK),\delta,\Delta)$ and an infinitesimal $\gamma$ such that $0<\gamma$, we want to construct a $\gamma$-contraction of $D_{c,\KK}$.  To this end, we consider the additive subgroup $\gamma^{-1}.\Hal(0):=\{\gamma^{-1}x \ ; \ x \in \Hal(0)\}$ of $\KK$, the quotient group $\gamma^{-1}\text{-}\Hal(\KK):=\KK/\gamma^{-1}.\Hal(0)$ whose elements are the sets $\gamma^{-1}\text{-}\Hal(x):=x+\gamma^{-1}.\Hal(0)$ for $x \in \KK$. On $\gamma^{-1}\text{-}\Hal(\KK)$, we define the generalized distance $\delta'$ by
$$\forall x,y \in \KK \ \ \delta'(\gamma^{-1}\text{-}\Hal(x),\gamma^{-1}\text{-}\Hal(y))=\text{pv}(\gamma |x-y|).$$
Each metric component is of the form $\gamma^{-1}\text{-}\Gal(x):=x+\gamma^{-1}.\Gal(0)$ for $x \in \KK$ and the set of metric components is the quotient group $\gamma^{-1}\text{-}\Gal(\KK):=\KK/\gamma^{-1}.\Gal(0)$. Then, we consider the galactic distance $\Delta'$ defined on $\gamma^{-1}\text{-}\Gal(\KK)$ by
$$\forall x,y \in \KK \ \ \Delta'(\gamma^{-1}\text{-}\Gal(x),\gamma^{-1}\text{-}\Gal(y))=\Gal(\gamma |x-y|).$$
Hence,  the map 
$$
\begin{array}{ccccc}
 f' & : & \Hal(\KK) & \longrightarrow & \gamma^{-1}\text{-}\Hal(\KK) \\
    &   &  \Hal(x)   &  \longmapsto     & \gamma^{-1}\text{-}\Hal(x)
 \end{array}
$$
is clearly a $\gamma$-contraction of $D_{c,\KK}$. Since $f'$ is not injective (for instance $f'(0)=f'(1)$), this map is not an isometry. Nevertheless, the map 
$$
\begin{array}{ccc}
 \Hal(\KK) & \longrightarrow & \gamma^{-1}\text{-}\Hal(\KK) \\
 \Hal(x)   &  \longmapsto     & \gamma^{-1}\text{-}\Hal(\gamma x)
\end{array}
$$
is an isometry of the galactic space $D_{c,\KK}$ to its $\gamma$-contraction. Thus, \emph{a $\gamma$-contraction of a continuous galactic line is isometric to itself}.

Let us now consider the case of the discrete galactic line $D_{d,\KK}$ . To this end, for each $C \in \Gal(\KK)$ we arbitrarily choose an element $x_C \in C$ and we define the map
$f'': D_{d,\KK} \rightarrow \gamma^{-1}\text{-}\Hal(\KK)$ such that, for each $x \in D_{d,\KK}$, we have $f''(x)= \gamma^{-1}\text{-}\Hal(x_C)$ where $x \in \Relatif_C$. Then, it is clear that $f''$ is a $\gamma$-contraction from $(D_{d,\KK},\delta,\Delta)$ to $(\gamma^{-1}\text{-}\Hal(\KK),\delta',\Delta')$. Since this last galactic space is isometric to $D_{c,\KK}$, we see that \emph{for $\gamma \simeq 0$, a $\gamma$-scaling of the discrete galactic line is isometric to the continuous galactic line.}

\vskip 3mm

\textbf{4.5} Now, we want to understand the relations between the different contractions of a given galactic space.
\begin{proposition}
We consider two limited elements $\alpha, \beta  \in \KK$ such that $0 < \beta \leq \alpha$ and two morphisms of galactic spaces  
$f_\alpha:(F,\delta,\Delta) \rightarrow (F_\alpha,\delta_\alpha,\Delta_\alpha)$  and $f_\beta:(F,\delta,\Delta) \rightarrow (F_\beta,\delta_\beta,\Delta_\beta)$ defined on the same space such that $f_\alpha$ is an $\alpha$-contraction and $f_\beta$ is a $\beta$-contraction. Then, there is an unique map $f_{\beta,\alpha}:F_\alpha \rightarrow F_\beta$ such that $f_\beta=f_{\beta,\alpha} \circ f_\alpha$. Moreover, $f_{\beta,\alpha}$ is a $\beta/\alpha$-contraction $(F_\alpha,\delta_\alpha,\Delta_\alpha) \rightarrow (F_\beta,\delta_\beta,\Delta_\beta)$.
\end{proposition}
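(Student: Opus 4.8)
The plan is to construct $f_{\beta,\alpha}$ by the universal property of the quotient and then verify it is a $\beta/\alpha$-contraction. First I would recall (using the proof of the existence theorem, or just the first properties proposition) that up to isomorphism $f_\alpha$ is the canonical projection onto $F/\sim_\alpha$ and $f_\beta$ the canonical projection onto $F/\sim_\beta$. So to get the unique factorization $f_\beta = f_{\beta,\alpha}\circ f_\alpha$ it suffices to check the inclusion of equivalence relations $\sim_\alpha \,\subset\, \sim_\beta$. When $\delta(x,y)<+\infty$ this reads: if $\alpha\,\delta(x,y)\simeq 0$ then $\beta\,\delta(x,y)\simeq 0$, which is immediate since $0<\beta\le\alpha$ forces $0\le\beta\,\delta(x,y)\le\alpha\,\delta(x,y)$ in $\KK$. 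When $\delta(x,y)=+\infty$ it reads: if $\alpha.\Delta(C_F(x),C_F(y))\subset\Hal(0)$ then $\beta.\Delta(C_F(x),C_F(y))\subset\Hal(0)$; again for any representative $t\in\Delta(C_F(x),C_F(y))$ with $t>0$ we have $0\le\beta t\le\alpha t$, so $\alpha t\simeq 0$ gives $\beta t\simeq 0$, and the elements of $\beta.\Delta(\cdots)$ are exactly such $\beta t$. This yields the unique set map $f_{\beta,\alpha}:F_\alpha\to F_\beta$, and it is surjective because $f_\beta$ is.

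Next I would compute the metric effect of $f_{\beta,\alpha}$. Write $\mu=\beta/\alpha$, a limited positive element of $\KK$ (limited because $0<\mu\le 1$). For $u=f_\alpha(x)$, $v=f_\alpha(y)$ in $F_\alpha$ I must show
\begin{align*}
\delta_\beta\bigl(f_{\beta,\alpha}(u),f_{\beta,\alpha}(v)\bigr) &=
 \begin{cases}
   \text{pv}\bigl(\mu\,\delta_\alpha(u,v)\bigr) & \text{if }\delta_\alpha(u,v)<+\infty\\
   \text{pv}\bigl(\mu.\Delta_\alpha(C_{F_\alpha}(u),C_{F_\alpha}(v))\bigr) & \text{if }\delta_\alpha(u,v)=+\infty
 \end{cases}\\
\Delta_\beta\bigl(f_{\beta,\alpha}[C_{F_\alpha}(u)],f_{\beta,\alpha}[C_{F_\alpha}(v)]\bigr) &= \mu\bullet\Delta_\alpha(C_{F_\alpha}(u),C_{F_\alpha}(v)).
\end{align*}
The galactic-distance line is the cleanest: by the contraction formulas for $f_\alpha$ and $f_\beta$, the right side equals $\mu\bullet(\alpha\bullet\Delta(C_F(x),C_F(y)))$ and the left side is forced to equal $\beta\bullet\Delta(C_F(x),C_F(y))$; so I need the associativity-type identity $\mu\bullet(\alpha\bullet\tau)=(\mu\alpha)\bullet\tau=\beta\bullet\tau$ for $\tau\in\Gal(\KK)$, which follows directly from the definition $\gamma\bullet\Gal(t)=\Gal(\gamma t)$ in \S4.1. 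For the generalized distance I split on whether $x,y$ lie in a common metric component of $F$. If they do and $\delta(x,y)<+\infty$: then $\delta_\alpha(u,v)=\text{pv}(\alpha\,\delta(x,y))$ and I want $\text{pv}(\mu\,\text{pv}(\alpha\,\delta(x,y)))=\text{pv}(\mu\alpha\,\delta(x,y))=\text{pv}(\beta\,\delta(x,y))=\delta_\beta(f_{\beta,\alpha}(u),f_{\beta,\alpha}(v))$; the middle step is the routine fact that $\text{pv}(\mu s)=\text{pv}(\mu\,\text{pv}(s))$ whenever $\mu$ is limited and $s\not\simeq\pm\infty$ (because $s-\text{pv}(s)\simeq 0$ and $\mu$ limited makes $\mu(s-\text{pv}(s))\simeq 0$), together with $\mu\alpha=\beta$. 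If $x,y$ lie in distinct metric components, then $\delta_\alpha(u,v)=\text{pv}(\alpha.\Delta(C_F(x),C_F(y)))$, which is either a nonnegative real or $+\infty$; in the finite subcase one argues exactly as before using that $\alpha.\Delta(\cdots)$ is a set on which pv is constant and $\mu$ is limited, and in the infinite subcase both sides are $+\infty$. Care is needed that when $\delta_\alpha(u,v)=+\infty$ the relevant formula for $f_{\beta,\alpha}$ uses $\mu.\Delta_\alpha(\cdots)$ rather than $\mu\bullet\Delta_\alpha(\cdots)$, and one checks $\text{pv}(\mu.\Delta_\alpha(C_{F_\alpha}(u),C_{F_\alpha}(v)))=\text{pv}((\mu\alpha).\Delta(C_F(x),C_F(y)))=\text{pv}(\beta.\Delta(C_F(x),C_F(y)))=\delta_\beta(\cdots)$, which is the same constancy-of-pv bookkeeping applied to the set $\alpha.\Delta(\cdots)$ and the limited factor $\mu$.

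Finally, once both displayed identities hold, the remark after the definition of $\gamma$-contraction says a surjective map satisfying the two contraction equalities is automatically a morphism, so $f_{\beta,\alpha}$ is a genuine $\beta/\alpha$-contraction and the proof is complete. I expect the main obstacle to be purely organizational rather than deep: namely keeping straight the four cases ($\delta<\infty$ vs $\delta=\infty$, crossed with $\delta_\alpha<\infty$ vs $\delta_\alpha=\infty$) and, within them, the distinction between the operations "$\,.\,$" and "$\,\bullet\,$" on galaxies — and verifying in each case that applying pv twice with an intermediate limited scaling $\mu$ is the same as applying it once to the combined scaling $\beta=\mu\alpha$. No step requires more than the elementary facts about $\text{pv}$, $\Hal(0)$ and the operations $\gamma.\tau$, $\gamma\bullet\tau$ already established in \S2 and \S4.1.
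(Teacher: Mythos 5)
Your proposal is correct and follows essentially the same route as the paper: the paper's proof consists precisely of your first paragraph's observation (that $f_\alpha(x_1)=f_\alpha(x_2)$, i.e.\ $\delta_\alpha(f_\alpha(x_1),f_\alpha(x_2))=0$, forces $\delta_\beta(f_\beta(x_1),f_\beta(x_2))=0$, so surjectivity of $f_\alpha$ yields the unique factorization) followed by the unproved assertion that the $\beta/\alpha$-contraction identities are ``easy to check'', which you carry out in detail. One small slip: when $\delta_\alpha(u,v)=+\infty$ and $\beta/\alpha\simeq 0$ the two sides need not both equal $+\infty$, but the computation you give immediately afterwards, $\text{pv}(\mu.\Delta_\alpha(\cdots))=\text{pv}(\beta.\Delta(\cdots))=\delta_\beta(\cdots)$, is the correct one and covers that case in full.
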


We say that $f_{\beta,\alpha}$ is the \emph{transition} between the two contractions $(F_\alpha,\delta_\alpha,\Delta_\alpha)$ and  $(F_\beta,\delta_\beta,\Delta_\beta)$ of $(F,\delta,\Delta)$.

\begin{proof}
For all $(x_1,x_2) \in F$, if $\delta_\alpha(f_\alpha(x_1),f_\alpha(x_2))=0$ then $\delta_\beta(f_\beta(x_1),f_\beta(x_2))=0$; since $f_\alpha$ is surjective, we deduce that there exists an unique map $f_{\beta,\alpha}:F_\alpha \rightarrow F_\beta$ such that $f_\beta=f_{\beta,\alpha} \circ f_\alpha$. It is easy to check that $f_{\beta,\alpha}$ is a $\beta/\alpha$ contraction.
\end{proof}

\begin{corollaire}
For each limited element $\gamma>0$ in $\KK$, two $\gamma$-contractions of a same galactic space are isometric.
\end{corollaire}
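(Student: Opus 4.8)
The corollary will follow in essentially one line from the preceding proposition on transitions; the plan is to specialize that proposition to equal coefficients. Concretely, let $f_\gamma:(F,\delta,\Delta)\to(F_\gamma,\delta_\gamma,\Delta_\gamma)$ and $f'_\gamma:(F,\delta,\Delta)\to(F'_\gamma,\delta'_\gamma,\Delta'_\gamma)$ be two $\gamma$-contractions of the same galactic space $(F,\delta,\Delta)$. I would apply the transition proposition with $\alpha=\beta=\gamma$ — the hypothesis $0<\beta\le\alpha$ is then trivially met — to the pair $(f_\gamma,f'_\gamma)$. This produces a unique map $\phi:F_\gamma\to F'_\gamma$ with $f'_\gamma=\phi\circ f_\gamma$, and the proposition guarantees that $\phi$ is a $(\gamma/\gamma)$-contraction, i.e.\ a $1$-contraction, from $(F_\gamma,\delta_\gamma,\Delta_\gamma)$ to $(F'_\gamma,\delta'_\gamma,\Delta'_\gamma)$.

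It then remains to observe that a $1$-contraction is an isometry of galactic spaces, so that the two target spaces are isometric, as claimed. This was already remarked right after the definition of a $\gamma$-contraction; to keep the corollary self-contained I would briefly recall why. Since $1\not\simeq 0$, the proposition on the first properties of contractions tells us that $\phi$ is a bijection and that, for all $x,y\in F_\gamma$, $\delta'_\gamma(\phi(x),\phi(y))=\text{pv}(1)\,\delta_\gamma(x,y)=\delta_\gamma(x,y)$ and $\Delta'_\gamma(C_{F'_\gamma}(\phi(x)),C_{F'_\gamma}(\phi(y)))=1\bullet\Delta_\gamma(C_{F_\gamma}(x),C_{F_\gamma}(y))=\Delta_\gamma(C_{F_\gamma}(x),C_{F_\gamma}(y))$, using that $\text{pv}$ restricts to the identity on $\Reel$ and that $1\bullet\tau=\tau$ for every $\tau\in\Gal(\KK)$. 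Hence $\phi$ satisfies both conditions of the definition of an isometry between galactic spaces, and since $\phi$ is bijective, $\phi^{-1}$ satisfies them too; so $(F_\gamma,\delta_\gamma,\Delta_\gamma)$ and $(F'_\gamma,\delta'_\gamma,\Delta'_\gamma)$ are isometric.

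I do not expect any genuine obstacle: the corollary is a direct specialization of the transition proposition. The only step that is not purely formal is the identification of $1$-contractions with isometries, and even that reduces to the two elementary facts $\text{pv}|_{\Reel}=\mathrm{id}$ and $1\bullet\tau=\tau$, together with the bijectivity supplied — for a non-infinitesimal coefficient — by the proposition on the first properties of contractions. One small bookkeeping point to state carefully is that the uniqueness of $\phi$ in the transition proposition already uses the surjectivity of $f_\gamma$, which is part of the definition of a $\gamma$-contraction, so nothing extra is needed there.
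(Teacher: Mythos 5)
Your proof is correct and follows exactly the route the paper intends: the corollary is stated without proof immediately after the transition proposition, and the intended argument is precisely your specialization $\alpha=\beta=\gamma$ yielding a $1$-contraction $\phi$ with $f'_\gamma=\phi\circ f_\gamma$, combined with the paper's earlier remark that a $1$-contraction is an isometry. Your extra verification that $\mathrm{pv}|_{\Reel}=\mathrm{id}$ and $1\bullet\tau=\tau$ just makes explicit what the paper leaves as a remark.
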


If a galactic space $(F_0,\delta_0,\Delta_0)$ is such that the $F_0$ has only one element, then $\delta_0$  and $\Delta_0$ are trivial and we say that $(F_0,\delta_0,\Delta_0)$ is a trivial galactic space. We notice that, for each galactic space $(F,\delta,\Delta)$, there is an unique morphism from $(F,\delta,\Delta)$ to $(F_0,\delta_0,\Delta_0)$. The next result shows that the limit when $\gamma \rightarrow 0$ of the $\gamma$-contractions of a galactic space is a trivial galactic space.

\begin{proposition}
Let us consider a galactic space $(F,\delta,\Delta)$ and a family of galactic spaces $\{(F_\gamma,\delta_\gamma,\Delta_\gamma)\}_{0 < \gamma \leq 1}$ such that, for each $0 < \gamma \leq 1$ in $\KK$, $(F_\gamma,\delta_\gamma,\Delta_\gamma)$ is a $\gamma$-contraction of $(F,\delta,\Delta)$. For every $\alpha, \beta \in \KK_+^*$ such that $0 < \beta \leq \alpha \leq 1$, let $f_{\beta,\alpha}$ be the \emph{transition} between the two contractions $(F_\alpha,\delta_\alpha,\Delta_\alpha)$ and  $(F_\beta,\delta_\beta,\Delta_\beta)$ of $(F,\delta,\Delta)$. Then,  in the category $\mathcal{G}_{\KK}$, the family  $\{f_{\beta, \alpha}\}_{0<\beta<\alpha \leq 1}$ has a direct limit which is a trivial galactic space. 
\end{proposition}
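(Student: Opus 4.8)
The plan is to exhibit the direct limit explicitly and then verify its universal property. The candidate is the trivial galactic space $(F_0,\delta_0,\Delta_0)$ equipped with the unique morphisms $g_\gamma\colon(F_\gamma,\delta_\gamma,\Delta_\gamma)\to(F_0,\delta_0,\Delta_0)$. First I would record that this data really forms a directed system with a cocone: the index set $\{\gamma\in\KK\ ;\ 0<\gamma\le 1\}$, ordered by the reverse of the order of $\KK$, is directed (a common upper bound of $\alpha$ and $\beta$ is $\min(\alpha,\beta)$); each transition $f_{\beta,\alpha}$ is a morphism by the transition Proposition; the cocycle identity $f_{\gamma,\beta}\circ f_{\beta,\alpha}=f_{\gamma,\alpha}$ for $\gamma\le\beta\le\alpha$ holds because precomposing both sides with the surjection $f_\alpha\colon F\to F_\alpha$ yields $f_\gamma$ on each side; and $g_\beta\circ f_{\beta,\alpha}=g_\alpha$ because there is exactly one morphism from a galactic space into $F_0$.

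The heart of the argument is the following claim: for every galactic space $(G,\delta_G,\Delta_G)$ and every compatible cocone $\{h_\gamma\colon F_\gamma\to G\}$ (i.e.\ $h_\beta\circ f_{\beta,\alpha}=h_\alpha$ whenever $\beta\le\alpha$), all the $h_\gamma$ coincide with one constant map. Indeed, using the $\gamma$-contractions $f_\gamma\colon F\to F_\gamma$ and the identities $f_{\beta,\alpha}\circ f_\alpha=f_\beta$, the composites $h_\gamma\circ f_\gamma\colon F\to G$ all equal a single map $k\colon F\to G$; so it suffices to prove $k$ constant. Let $x,y\in F$. Recall that for an infinitesimal $\gamma$ the fibre of $f_\gamma$ through a point $u$ is $\{v\in F\ ;\ \gamma.\Delta(C_F(u),C_F(v))\subset\Hal(0)\}$. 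If $C_F(x)=C_F(y)$, then $\Delta(C_F(x),C_F(y))=0$ and $f_\gamma(x)=f_\gamma(y)$ for \emph{any} nonzero infinitesimal $\gamma$ (such exist in $\KK$, and lie automatically in $(0,1]$). If $C_F(x)\neq C_F(y)$, write $\Delta(C_F(x),C_F(y))=\Gal(t)$ with $t>0$ infinitely large; then $\gamma:=t^{-2}$ is a nonzero infinitesimal in $(0,1]$, and $\gamma.\Gal(t)=t^{-1}+t^{-2}\Gal(0)\subset\Hal(0)$ because $t^{-1}\simeq 0$ and any limited number divided by the infinitely large $t^{2}$ is infinitesimal; hence $f_\gamma(x)=f_\gamma(y)$. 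In either case $k(x)=h_\gamma(f_\gamma(x))=h_\gamma(f_\gamma(y))=k(y)$, so $k$ is constant with some value $p\in G$; since each $f_\gamma$ is onto, each $h_\gamma$ is the constant map with value $p$.

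Granting the claim, the universal property is routine. Set $u\colon F_0\to G$, $u(\ast)=p$. Every map out of a one-point galactic space is a morphism, since in both Lipschitz conditions $x_1=x_2$ forces both sides to $0$ (take Lipschitz constant $1$). Then $u\circ g_\gamma$ and $h_\gamma$ are both the constant map of value $p$, so $u$ factors the cocone, and it is the unique such map because $u(\ast)$ must equal $h_\gamma$ of any point of $F_\gamma$. Therefore $(F_0,\delta_0,\Delta_0)$, with the morphisms $g_\gamma$, is the direct limit of $\{f_{\beta,\alpha}\}$ in $\mathcal{G}_{\KK}$, and it is by definition a trivial galactic space.

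The one step I expect to require real care is the case $C_F(x)\neq C_F(y)$ in the constancy claim: the coefficient must be chosen not as $t^{-1}$ (which only makes $\gamma t$ appreciable) but as something like $t^{-2}$, so that multiplying the infinitely large galactic distance between the two metric components by $\gamma$ lands inside $\Hal(0)$ and therefore fuses the two components at stage $\gamma$. The remaining ingredients — directedness of the index set, the cocone identities, and the vacuity of the morphism conditions for maps out of $F_0$ — are bookkeeping.
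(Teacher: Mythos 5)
Your proposal is correct and takes essentially the same route as the paper: exhibit the trivial space as the colimit and show that any compatible cocone collapses to a single constant map because any two points of $F$ are identified at a sufficiently small scale, surjectivity of the contractions then forcing each $h_\gamma$ to be constant. Your explicit choice $\gamma=t^{-2}$ merely makes precise the paper's appeal to a ``sufficiently small $\beta_0$''.
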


\begin{proof}
From the preceding proposition, we know that the product of two transitions is a transition. Now, we choose a trivial galactic space $(F_0,\delta_0,\Delta_0)$ and for each  $0 < \gamma \leq 1$ in $\KK$, let $f_{0,\gamma}:(F_\gamma,\delta_\gamma,\Delta_\gamma) \rightarrow (F_0,\delta_0,\Delta_0)$ be a trivial morphism. Then, for any $\alpha, \beta  \in \KK$ such that $0 < \beta \leq \alpha \leq 1$, we have $f_{0,\alpha}=f_{0,\beta} \circ f_{\beta,\alpha}$. 

Now, we consider a galactic space $(G,d,D)$ and we suppose that, for each $\alpha  \in \KK$ such that $0 <  \alpha \leq 1$ and for each $\alpha$-contraction $(F_\alpha,\delta_\alpha,\Delta_\alpha)$  of $(F,\delta,\Delta)$, we have a morphism $g_\alpha:(F_\alpha,\delta_\alpha,\Delta_\alpha) \rightarrow (G,d,D)$ such that $g_\alpha=g_\beta \circ f_{\beta,\alpha}$ for every element $\beta$ of $\KK$ such that $0 < \beta \leq \alpha$ and for every $\beta$-contraction $(F_\beta,\delta_\beta,\Delta_\beta)$ of $(F,\delta,\Delta)$ with transition $f_{\beta,\alpha}$. Given two points $x_1$ and $x_2$ in $F_\alpha$ we can find a sufficiently small $\beta_0 < \alpha$ such that $\delta_\beta(f_{\beta,\alpha}(x_1),f_{\beta,\alpha}(x_2))=0$ for every $\beta \leq \beta_0$. Hence, we see that there is a single element $y_0 \in G$ such that $g_\alpha(F_\alpha)=\{y_0\}$ for each $\alpha$. Consequently, the constant map $g_0:F_0 \rightarrow G$ with value $y_0$ is the unique morphism such that $g_\alpha=g_0 \circ f_{0,\alpha}$ for each $\alpha$.
\end{proof}

\vskip 3mm

\textbf{4.6}
In the following result, for each galactic space $(F,\delta,\Delta)$, the set $F$ is
provided with the topology defined by the generalized distance $\delta$ and the set $\mathcal{M}_F$ of its metric components is provided with the topology defined by the galactic distance $\Delta$.

\begin{proposition} 
Let us consider a limited element $\gamma>0$ in $\KK$ and a $\gamma$-contraction $f_\gamma:(F,\delta,\Delta) \rightarrow (F_\gamma,\delta_\gamma,\Delta_\gamma)$.
\begin{enumerate}
\item The maps $f_\gamma:F \rightarrow F_\gamma$ and
$\widetilde{f}_\gamma:\mathcal{M}_F \rightarrow \mathcal{M}_{F_\gamma}$ are continuous.
\item If $\gamma \not\simeq 0$, $f_\gamma$ and $\widetilde{f}_\gamma$ 
are homeomorphisms.
\item If $\gamma \simeq 0$, then for every
$z \in F_\gamma$,  ${f_\gamma}^{-1}(\{z\})$ is an
open set of $F$.
\item If $\gamma \simeq 0$ and if we can find $\eta \in \KK_+^*$ such that $\eta \simeq 0$ and $\gamma/\eta \simeq 0$, then, for every
$Z \in \mathcal{M}_{F_\gamma}$  the set
${\widetilde{f}_\gamma}^{-1}(\{Z\})$ is an open set of
$\mathcal{M}_F$.
\end{enumerate}
\end{proposition}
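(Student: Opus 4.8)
The plan is to prove each assertion by checking that preimages of basic open sets are open. Recall that the topology of $F$ (resp.\ of $F_\gamma$) has a basis consisting of the $\delta$-balls (resp.\ $\delta_\gamma$-balls) of finite positive radius, and the topology of $\mathcal{M}_F$ (resp.\ of $\mathcal{M}_{F_\gamma}$) a basis consisting of the $\Delta$-balls (resp.\ $\Delta_\gamma$-balls) of radius in $\Gal(\KK)_+^*$. Since $f_\gamma$ is surjective, so is $\widetilde{f}_\gamma$ (one has $\widetilde{f}_\gamma(C_F(x)) = C_{F_\gamma}(f_\gamma(x))$ for $x \in F$), so every basic ball of $F_\gamma$ is centered at some $f_\gamma(a)$ and every basic ball of $\mathcal{M}_{F_\gamma}$ at some $\widetilde{f}_\gamma(E)$. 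I shall use repeatedly the defining formulas of a $\gamma$-contraction and those of Proposition~2, together with the elementary remark that $\phi_\gamma \colon \tau \mapsto \gamma \bullet \tau$ is an order-preserving group endomorphism of $\Gal(\KK)$, which is well defined because $\gamma$ is limited, so that $\gamma\,\Gal(0) \subseteq \Gal(0)$.

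For assertion~(1): concerning $f_\gamma$, if $\gamma \not\simeq 0$ then by Proposition~2 one has $\delta_\gamma(f_\gamma(a),f_\gamma(x)) = \text{pv}(\gamma)\,\delta(a,x)$, which is $+\infty$ precisely when $\delta(a,x) = +\infty$ (then $C_F(a) \neq C_F(x)$, $\gamma$ is appreciable, and $\gamma.\Delta(C_F(a),C_F(x))$ consists of infinitely large elements); hence $f_\gamma^{-1}\big(B_{\delta_\gamma}(f_\gamma(a),r)\big) = B_\delta\big(a, r/\text{pv}(\gamma)\big)$ is open. If $\gamma \simeq 0$, then $\delta_\gamma(f_\gamma(a),f_\gamma(x)) = \text{pv}\big(\gamma.\Delta(C_F(a),C_F(x))\big)$ depends only on $C_F(x)$, so the preimage of any $\delta_\gamma$-ball is a union of metric components of $F$, hence open. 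Concerning $\widetilde{f}_\gamma$, the preimage of $B_{\Delta_\gamma}(\widetilde{f}_\gamma(E),R)$ is $P = \{E' \in \mathcal{M}_F : \gamma \bullet \Delta(E,E') < R\}$; given $E' \in P$, set $\sigma_0 = R - \gamma\bullet\Delta(E,E') > 0$, write $\sigma_0 = \Gal(s_0)$ with $s_0$ necessarily infinitely large, and let $\rho = \Gal\big(s_0/(2\gamma)\big) \in \Gal(\KK)_+^*$, so that $\gamma\bullet\rho = \Gal(s_0/2) < \sigma_0$; then for every $E'' \in B_\Delta(E',\rho)$ the triangle inequality and the monotonicity of $\phi_\gamma$ give $\gamma\bullet\Delta(E,E'') \leq \gamma\bullet\Delta(E,E') + \gamma\bullet\rho < R$, whence $B_\Delta(E',\rho) \subseteq P$ and $P$ is open. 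The only slightly delicate point here is the construction of the radius $\rho$, which rests on the invertibility of $\gamma$.

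For assertion~(2), assume $\gamma \not\simeq 0$. By Proposition~2, $f_\gamma$ is a bijection, and $\widetilde{f}_\gamma$ is bijective as well: if $\widetilde{f}_\gamma(E_1) = \widetilde{f}_\gamma(E_2)$ then $\delta_\gamma(f_\gamma(x_1),f_\gamma(x_2)) < +\infty$ for $x_i \in E_i$, hence $\delta(x_1,x_2) < +\infty$ and $E_1 = E_2$. As $\gamma$ is appreciable, $\text{pv}(\gamma) > 0$ and $1/\gamma$ is limited too, so $\gamma\,\Gal(0) = \Gal(0)$ and $\phi_\gamma$ is an order-automorphism of $\Gal(\KK)$ with inverse $\phi_{1/\gamma}$. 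Consequently the inverse maps satisfy $\delta(x,y) = \delta_\gamma(f_\gamma(x),f_\gamma(y))/\text{pv}(\gamma)$ and $\Delta(E,E') = \phi_{1/\gamma}\big(\Delta_\gamma(\widetilde{f}_\gamma(E),\widetilde{f}_\gamma(E'))\big)$, and repeating the ball computations of~(1) (with $\phi_{1/\gamma}$ in place of $\phi_\gamma$) shows that $f_\gamma^{-1}$ and $\widetilde{f}_\gamma^{-1}$ are continuous; so $f_\gamma$ and $\widetilde{f}_\gamma$ are homeomorphisms.

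For assertions~(3) and~(4), assume $\gamma \simeq 0$. For~(3), write $z = f_\gamma(u)$; Proposition~2 gives $f_\gamma^{-1}(\{z\}) = \{v \in F : \gamma.\Delta(C_F(u),C_F(v)) \subseteq \Hal(0)\}$, a condition depending only on $C_F(v)$, so $f_\gamma^{-1}(\{z\})$ is a union of metric components of $F$, each open in $F$, hence open. For~(4), write $Z = \widetilde{f}_\gamma(E)$. Using $\delta_\gamma(f_\gamma(x),f_\gamma(x')) = \text{pv}\big(\gamma.\Delta(C_F(x),C_F(x'))\big)$ and the fact that $\text{pv}$ is constant on $\gamma.\tau$ when $\tau \in \Gal(\KK)_+^*$, one checks that $\widetilde{f}_\gamma(E') = Z$ if and only if $\gamma\bullet\Delta(E,E') = 0$; hence $\widetilde{f}_\gamma^{-1}(\{Z\}) = \{E' \in \mathcal{M}_F : \Delta(E,E') \in H\}$, where $H = \{\tau \in \Gal(\KK) : \gamma\bullet\tau = 0\}$ is the kernel of $\phi_\gamma$, an order-convex subgroup of $\Gal(\KK)$ (convexity because $\phi_\gamma$ is order-preserving with $\phi_\gamma(0) = 0$). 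Now the hypothesis on $\eta$ provides a strictly positive element of $H$, namely $\rho_0 = \Gal(1/\eta)$: indeed $1/\eta$ is infinitely large, so $\rho_0 > 0$, and $\gamma\bullet\rho_0 = \Gal(\gamma/\eta) = 0$ since $\gamma/\eta \simeq 0$. Finally, for $E'$ with $\Delta(E,E') \in H$ and any $E'' \in B_\Delta(E',\rho_0)$, the two triangle inequalities give $-\rho_0 < \Delta(E,E'') - \Delta(E,E') < \rho_0$, so this difference lies in $H$ by convexity and therefore $\Delta(E,E'') \in H$; hence $B_\Delta(E',\rho_0) \subseteq \widetilde{f}_\gamma^{-1}(\{Z\})$, which is thus open. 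I expect the heart of the proof to be this last part: identifying the fibre $\widetilde{f}_\gamma^{-1}(\{Z\})$ with $\{E' : \Delta(E,E') \in H\}$, and observing that a ball fits inside it exactly because $H$ contains a strictly positive element --- which is precisely what the hypothesis on $\eta$ guarantees.
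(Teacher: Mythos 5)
Your proof is correct and follows essentially the same route as the paper's: checking that preimages of basic balls are open via the formulas of Proposition~2, and in part~(4) exhibiting the ball of radius $\Gal(\eta^{-1})$ inside the fibre. You are in fact slightly more careful than the paper in two places: the continuity of $\widetilde{f}_\gamma$ for an arbitrary limited $\gamma$ (where you rescale the radius by $\gamma$ instead of invoking a bound $\gamma\leq 1$ that is not among the hypotheses), and in~(4), where your convexity argument puts a ball around \emph{every} element of the fibre rather than only around the chosen preimage $E$ of $Z$.
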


\begin{proof}
1. Let $V$ be an open set of $F_\gamma$ and $z \in
(f_{\gamma})^{-1}(V)$. Let $z' \in V$ such that
$f_{\gamma}(z)=z'$ and $r \in \Reel_+^*$ such that the
open ball $B_{\delta_\gamma}(z',r)$ is a subset of $V$. Then,
from the condition $\gamma \leq 1$ we deduce that 
$B_{\delta}(z,r) \subset (f_{\gamma})^{-1}(B_{\delta_\gamma}(z',r))$.
Hence,  the set ${f_{\gamma}}^{-1}(V)$ is an open set of
$F_\gamma$. Thus $f_\gamma$ is continuous. A similar argument shows that $\widetilde{f}_\gamma$ is also continuous.

\vskip 2mm

2. We suppose that $\gamma \not\simeq 0$. Then, we know that $f_\gamma$ is invertible and, for every $x',y' \in F_\gamma$
\begin{align*}
\delta(f_\gamma^{-1}(x'),f_\gamma^{-1}(y')) & = \text{pv}(\gamma)^{-1}\,\delta_\gamma(x',y') \\
\Delta(C_{F}(f_\gamma^{-1}(x')),C_{F}(f_\gamma^{-1}(y'))) & =\gamma^{-1} \bullet\Delta_\gamma(C_{F_\gamma}(x'),C_{F_\gamma}(y'))
\end{align*}
From this, we deduce that $f_\gamma^{-1}$ and $(\widetilde{f}_\gamma)^{-1}$ are continuous.

3. We suppose now that $\gamma \simeq 0$. Given $z' \in
F_\gamma$, we consider any $z \in F$ such that
$f_{\gamma}(z)=z'$. Let $E \in \mathcal{M}_F$ such
that $z \in E$ and let $t$ an arbitrary point of $E$. Since $\delta(z,t)$ is limited we see that $\delta_\gamma(f_\gamma(z),f_\gamma(t))=0$. Thus $E \subset f_\gamma^{-1}(\{z'\})$ and since $E$ is a neighborhood of $z$ in $F$, we get that $f_\gamma^{-1}(\{z'\}))$ is open.

4. We suppose that $\gamma \simeq 0$ and that  we can find $\eta \in \KK_+^*$ such that $\eta \simeq 0$ and $\gamma/\eta \simeq 0$ . We consider
$Z \in \mathcal{M}_{F_\gamma}$ and let $E \in \mathcal{M}_F$ such that
$\widetilde{f}_{\gamma}(E)=Z$. Then, $\rho=\Gal(\eta^{-1})$ is strictly greater than $0:=\Gal(0)$ in the ordered group $\Gal(\KK)$. We consider an element $H$ of the 'open ball' $B_{\Delta}(E,\rho)$ of $\mathcal{M}_F$. Thus, 
$\Delta(E,H)<\rho$ ; consequently, $H \in {\widetilde{f}_\gamma}^{-1}(\{Z\}$ since $\Delta_\gamma(\widetilde{f}_\gamma(E),\widetilde{f}_\gamma(H))=0$ because $\gamma \bullet \Delta(E,H) \leq \gamma \bullet \rho = \Gal(\gamma/\eta) = \Gal(0)=0$ . Hence, $B_\Delta(E,\rho) \subset {\widetilde{f}_\gamma}^{-1}(\{Z\})$ and this last set is open.
\end{proof}
If $\gamma \simeq 0$, we notice that the property
$$\text{$\exists \eta \in \KK_+^*$ tel que $\eta \simeq 0$ et $\gamma/\eta \simeq 0$}$$
 is not  satisfied in every ordered field extension of $\Reel$: for instance, if we choose $\gamma:=X$, we cannot find such a $\eta$ in the field or rational functions $\Reel(X)$ or in the field or Laurent series $\Reel((X))$. In the field $\Reel[[X^\Rationnel]]$ of Puiseux series or in a field ${^*\Reel}$ of hyperreal numbers, this property is  true for every $\gamma \simeq 0$.

\section{Nonstandard scaling of a metric space}

In all this section, we consider a metric space $(X,d)$. We need some nonstandard extensions of $\Reel$, $X$ and $d$. To this end, we can use the method of ultra-powers as in section~2. More generally \cite{Hens87,ANSLoeb}, we can consider a
superstructure $V(S)$ over a set $S$ such that $(X \cup \Reel)
\subset S$ and a nonstandard model of $V(S)$ 
$$\begin{array}{ccc}
    V(S) & \longrightarrow & V({^*S}) \\
    Y & \longmapsto & {^*Y} \\
  \end{array}$$
with a large enough saturation property (our study does not require any particular refinement in the choice of the nonstandard model). Equivalently, we can used the axiomatic approach of Hrba\v{c}ek \cite{Hrba79}. Notice that  Nelson's internal set theory IST \cite{Nelson} is not adapted to this work since it does not allow a convenient treatment of external sets.

Then we get at the same time the nonstandard extensions ${^*\Reel}$ or $\Reel$, 
${^*X}$ of $X$ and
${^*\!d}:{^*X} \times {^*X} \rightarrow {^*\Reel}_+$ of $d$  provided by the given
nonstandard model.

\vskip 3mm

\textbf{5.1}  Each element of the multiplicative group ${^*\Reel}_+^*:=\{\gamma
\in {^*\Reel} \ ; \ 0<\gamma \}$ of strictly positive hyperreal
numbers is called a \emph{scale}.

Given a scale $\alpha \in {^*\Reel}_+^*$, we define the equivalence
relation $\simeq_\alpha$ on ${^*X}$ defined by
$$\forall (x,y) \in {^*X}^2 \ \left( x \simeq_\alpha y \
\Longleftrightarrow \alpha\,{^*\!d}(x,y) \simeq 0\right)$$ Then, we
introduce the quotient set $X_\alpha={^*X}/\simeq_\alpha$ and the
canonical projection
$$\begin{array}{ccccc}
    \pi_\alpha & : & {^*X} & \longrightarrow & X_\alpha \\
                   &   &    x      & \longmapsto  & \pi_\alpha(x) \\
  \end{array}$$
where $\pi_\alpha(x)$ denotes the equivalence class of $x \in
{^*X}$, i.e the set of $y \in {^*X}$ such that $x \simeq_\alpha
y$.

Associated to the hyper-distance ${^*\!d}$, there is a natural map
$$
\begin{array}{ccccc}
  \delta_\alpha & : & X_\alpha \times X_\alpha & \longrightarrow & \Reel_+ \cup \{+\infty\} \\
              &   & (\xi,\eta) & \longmapsto & \delta_\alpha(\xi,\eta) \\
\end{array}
$$
such that, if $\xi=\pi_\alpha(x)$ and $\eta=\pi_\alpha(y)$, then
$\delta_\alpha(\xi,\eta)=\text{st}(\alpha\,{^*\!d}(x,y))$.

It is clear that $\delta_\alpha$ is a generalized distance on
$X_\alpha$. Let $\mathcal{M}_\alpha$ be the set of metric components
of $X_\alpha$ for $\delta_\alpha$. Thus each $E \in
\mathcal{M}_\alpha$ is of the following form
$$E=\text{Cone}(X,x_E,\alpha):=\left\{x \in {^*X} \ ; \ \alpha\,{^*\!d}(x,x_E) \not\simeq
+\infty\right\}/\simeq_\alpha$$ where $x_E$ is any point in ${^*X}$
such that $\pi_\alpha(x_E) \in E$. When $\alpha \simeq 0$, the set
$\text{Cone}(X,x_E,\alpha)$ is exactly the so-called
\emph{asymptotic cone} of $(X,d)$ with respect to $x_E$ and
$\alpha$.

We recall that $\text{Gal}$ is the canonical projection ${^*\Reel}
\rightarrow \Gal({^*\Reel})={^*\Reel}/\Gal(0)$. For every $E_1$ and
$E_2$ in $\mathcal{M}_\alpha$, we choose $x_{E_1}$ and $x_{E_2}$ in
${^*X}$ such that $E_1=\text{Cone}(X,x_{E_1},\alpha)$ and
$E_2=\text{Cone}(X,x_{E_2},\alpha)$; then we define
$$\Delta_\alpha(E_1,E_2)=\text{Gal}(\alpha\,{^*\!d}(x_{E_1},x_{E_2}))$$
Thus, we get a map $\Delta_\alpha:\mathcal{M}_\alpha^2 \rightarrow
\Gal({^*\Reel})$ which is a galactic distance and we can consider the galactic space
$(X_\alpha,\delta_\alpha,\Delta_\alpha)$.

\begin{definition}
Given a scale $\alpha \in {^*\Reel}_+^*$, the (nonstandard) \emph{$\alpha$-scaling} of the metric space $(X,d)$ is the
galactic space $(X_\alpha,\delta_\alpha,\Delta_\alpha)$.
\end{definition}

Hence, starting from a usual metric space $(X,d)$, we get a family $(X_\alpha,\delta_\alpha,\Delta_\alpha)_{\alpha \in {^*\Reel}_+^*}$ of galactic spaces which are the different scaling of $(X,d)$.

\vskip 3mm

\textbf{5.2}  Let us now consider two scales $\alpha,\beta \in {^*\Reel}_+^*$ such
that $\beta < \alpha$. Now, we want to compare the
$\alpha$-scaling and the $\beta$-scaling of our metric space
$(X,d)$.

If $(x,y) \in {^*X}^2$ is such that $\alpha\,{^*\!d}(x,y) \simeq 0$,
then $\beta\,{^*\!d}(x,y) \simeq 0$. Therefore, there exists a
natural surjective map $\pi_{\beta,\alpha} : X_\alpha \rightarrow
X_\beta$ such that $\pi_\beta=\pi_{\beta,\alpha} \circ \pi_\alpha$.
In the same way, if $\gamma \in {^*\Reel}_+^*$ is such that $\gamma
\leq \beta \leq \alpha$, then
$\pi_{\gamma,\alpha}=\pi_{\gamma,\beta} \circ \pi_{\beta,\alpha}$.

\begin{theoreme}
The map $\pi_{\beta,\alpha}$ is a  $(\beta/\alpha)$-contraction from the $\alpha$-scaling $(X_\alpha,\delta_\alpha,\Delta_\alpha)$ of $(X,d)$ onto its $\beta$-scaling $(X_\beta,\delta_\beta,\Delta_\beta)$.
\end{theoreme}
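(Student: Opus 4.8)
The plan is to set $\gamma := \beta/\alpha$ and to check that $\pi_{\beta,\alpha}$ fulfils, verbatim, the definition of a $\gamma$-contraction. Since $0 < \beta < \alpha$ in ${^*\Reel}$ one has $0 < \gamma \leq 1$, so $\gamma$ is a limited, strictly positive element of $\KK = {^*\Reel}$ (possibly infinitesimal, possibly appreciable; both are permitted by the definition). The map $\pi_{\beta,\alpha}$ is surjective, as observed in 5.2. By the remark made just after the definition of a $\gamma$-contraction, it is then enough to establish the two defining equalities — the one for $\delta_\beta$ and the one for $\Delta_\beta$ — since a surjective map satisfying them is automatically a morphism. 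Throughout, for $\xi \in X_\alpha$ I would fix a representative $x \in {^*X}$ with $\pi_\alpha(x) = \xi$, so that $\pi_{\beta,\alpha}(\xi) = \pi_\beta(x)$, and the single identity doing all the work is $\beta\,{^*\!d}(x,y) = \gamma\cdot\bigl(\alpha\,{^*\!d}(x,y)\bigr)$.

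For the galactic-distance equality, take $E_1,E_2\in\mathcal{M}_\alpha$ with chosen base points $x_{E_1},x_{E_2}\in{^*X}$, so $E_i=\text{Cone}(X,x_{E_i},\alpha)$ and $\Delta_\alpha(E_1,E_2)=\Gal(\alpha\,{^*\!d}(x_{E_1},x_{E_2}))$. Since $\pi_\alpha(x_{E_i})\in E_i$, the image component $\widetilde{\pi}_{\beta,\alpha}(E_i)=\pi_{\beta,\alpha}[E_i]$ is the metric component of $X_\beta$ containing $\pi_\beta(x_{E_i})$, that is $\text{Cone}(X,x_{E_i},\beta)$; hence $\Delta_\beta(\widetilde{\pi}_{\beta,\alpha}(E_1),\widetilde{\pi}_{\beta,\alpha}(E_2))=\Gal(\beta\,{^*\!d}(x_{E_1},x_{E_2}))$. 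On the other hand, by the definition of $\bullet$ recalled in 4.1, $\gamma\bullet\Delta_\alpha(E_1,E_2)=\Gal\bigl(\gamma\cdot\alpha\,{^*\!d}(x_{E_1},x_{E_2})\bigr)=\Gal(\beta\,{^*\!d}(x_{E_1},x_{E_2}))$, and the two sides agree. This stays correct in the degenerate situation $E_1\neq E_2$ but $\widetilde{\pi}_{\beta,\alpha}(E_1)=\widetilde{\pi}_{\beta,\alpha}(E_2)$: then $\beta\,{^*\!d}(x_{E_1},x_{E_2})\in\Gal(0)$ and both sides equal $0$.

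For the generalized-distance equality I would split along the two cases of the definition. If $\delta_\alpha(\xi_1,\xi_2)<+\infty$, then $\alpha\,{^*\!d}(x_1,x_2)$ is limited, hence so is $\beta\,{^*\!d}(x_1,x_2)=\gamma\cdot\alpha\,{^*\!d}(x_1,x_2)$; as $\gamma$ is limited and $\alpha\,{^*\!d}(x_1,x_2)\simeq\delta_\alpha(\xi_1,\xi_2)$, we get $\beta\,{^*\!d}(x_1,x_2)\simeq\gamma\,\delta_\alpha(\xi_1,\xi_2)$, so $\delta_\beta(\pi_\beta(x_1),\pi_\beta(x_2))=\text{st}(\beta\,{^*\!d}(x_1,x_2))=\text{pv}(\gamma\,\delta_\alpha(\xi_1,\xi_2))$, as wanted. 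If $\delta_\alpha(\xi_1,\xi_2)=+\infty$, then $\xi_1,\xi_2$ lie in distinct metric components $E_1=C_{X_\alpha}(\xi_1)$, $E_2=C_{X_\alpha}(\xi_2)$, and $\Delta_\alpha(E_1,E_2)=\Gal(\alpha\,{^*\!d}(x_1,x_2))$ is $>0$ in $\Gal(\KK)$; by 4.1 the principal value is constant on the subset $\gamma.\Delta_\alpha(E_1,E_2)=\beta\,{^*\!d}(x_1,x_2)+\gamma\,\Gal(0)$ of $\KK$, with that constant value equal to $\text{pv}(\beta\,{^*\!d}(x_1,x_2))$, which is precisely $\delta_\beta(\pi_\beta(x_1),\pi_\beta(x_2))$ (with the usual reading that $\text{st}$ returns $+\infty$ on an infinitely large argument, and $0$ when $\beta\,{^*\!d}(x_1,x_2)\simeq0$). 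This exhausts the cases, and $\pi_{\beta,\alpha}$ is a $\gamma$-contraction.

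I expect the only genuinely delicate point to be this last case: one must read $\gamma.\Delta_\alpha(E_1,E_2)$ as the \emph{subset} $\{\gamma s : s\in\Delta_\alpha(E_1,E_2)\}$ of $\KK$ — not as the galaxy $\gamma\bullet\Delta_\alpha(E_1,E_2)$ — and invoke the fact, established in 4.1, that $\text{pv}$ is constant on such a set. Once that distinction is kept straight, the rest is a routine unwinding of the definitions of $\delta_\alpha$, $\Delta_\alpha$, $\pi_{\beta,\alpha}$ and of the operations $\gamma.(\,\cdot\,)$ and $\gamma\bullet(\,\cdot\,)$, all propelled by the single identity $\beta\,{^*\!d}=\gamma\,(\alpha\,{^*\!d})$.
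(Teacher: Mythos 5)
Your proof is correct and follows essentially the same route as the paper: everything is driven by the identity $\beta\,{^*\!d}(x,y)=(\beta/\alpha)\cdot\alpha\,{^*\!d}(x,y)$, combined with the explicit formulas for $\delta_\alpha$, $\delta_\beta$, $\Delta_\alpha$, $\Delta_\beta$ in terms of $\text{st}$ and $\Gal$. If anything you are more careful than the paper in the case $\delta_\alpha(\xi_1,\xi_2)=+\infty$, where you explicitly invoke the constancy of $\text{pv}$ on the set $\gamma.\Delta_\alpha(E_1,E_2)$ from Section~4.1, a point the paper's proof passes over silently.
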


In other words, the $\beta$-scaling of $(X,d)$ is a
$(\beta/\alpha)$-contraction of its $\alpha$-scaling. Consequently,
insofar as we are only concerned by the structure of galactic space,
we can define the $\beta$-scaling of $(X,d)$ using only its
$\alpha$-scaling.

\begin{proof}

It is clear that $\pi_{\beta,\alpha}$ is a surjective map from $X_\alpha$ to $X_\beta$.
Furthermore, if $\xi=\pi_\alpha(x)$ and $\eta=\pi_\alpha(y)$, then
$$\delta_\alpha(\xi,\eta)=\text{st}(\alpha\,{^*\!d}(x,y))\
\text{and} \
\delta_\beta(\pi_{\beta,\alpha}(\xi),\pi_{\beta,\alpha}(\eta))=
\text{st}(\beta\,{^*\!d}(x,y))$$ Therefore
$$\delta_\beta(\pi_{\beta,\alpha}(\xi),\pi_{\beta,\alpha}(\eta))=
\text{st}((\beta/\alpha)\,\alpha{^*\!d}(x,y))$$ hence $\displaystyle
\delta_\beta(\pi_{\beta,\alpha}(\xi),\pi_{\beta,\alpha}(\eta)) =
 \left\{\begin{array}{ll}
     \text{st}((\beta/\alpha)\,\delta_\alpha(\xi,\eta)) &
             \text{if $\delta_\alpha(\xi,\eta)<+\infty$} \\
     \text{st}(\gamma.\Delta(C_{X_\alpha}(\xi),C_{X_\alpha}(\eta)))
     & \text{if $\delta_\alpha(\xi,\eta)=+\infty$} \\
   \end{array}
 \right.$.

In the same way, if $E=\text{Cone}(X,x,\alpha)$ and
$F=\text{Cone}(X,y,\alpha)$, then
$$\Delta_\beta(\pi_{\beta,\alpha}[E],\pi_{\beta,\alpha}[F])
=\text{Gal}(\beta\,{^*\!d}(x,y))=
(\beta/\alpha)\bullet\Delta_\alpha(E,F)$$

\end{proof}

From this, it results that the maps $\pi_{\beta,\alpha}$ have all the properties of transitions stated in the preceding section. For instance, the limit of $(X_\alpha,\delta_\alpha,\Delta_\alpha)$ when $\alpha \rightarrow 0$ in $\KK_+^*$ is a trivial galactic space.

\vskip 3mm

\textbf{5.3}  A new feature about the family of nonstandard scalings of a metric space $(X,d)$ is that the galactic spaces $(X_\alpha,\delta_\alpha,\Delta_\alpha)$ are defined for arbitrary large scale $\alpha \in {^*\Reel}_+^*$. Then, a natural question is related to the existence of the limit of $(X_\alpha,\delta_\alpha,\Delta_\alpha)$ when $\alpha \rightarrow +\infty$ in ${^*\Reel}_+^*$.

Let us call a \emph{chain} any family
$\xi=(\xi_\alpha)_{\alpha \in {^*\!\Reel}_+^*}$ such that, for each
$\alpha \in {^*\!\Reel}_+^*$ the element $\xi_\alpha$ belongs to
$X_\alpha$ and $\pi_{\beta,\alpha}(\xi_\alpha)=\xi_\beta$ for each
$\beta \leq \alpha \in {^*\!\Reel}_+^*$.

\begin{proposition}
Let us consider two chains $\xi=(\xi_\alpha)_{\alpha \in
{^*\!\Reel}_+^*}$ and $\xi'=(\xi_\alpha')_{\alpha \in
{^*\!\Reel}_+^*}$ such that $\xi \neq \xi'$. For every $\alpha \in {^*\!\Reel}_+^*$, let
$E_\alpha$  and $E_\alpha'$ be the metric components of respectively  $\xi_\alpha$ and $\xi_\alpha'$ in $X_\alpha$. Then, there is
$\alpha_0 \in {^*\!\Reel}_+^*$ such that, for every $\alpha \geq
\alpha_0$, we have $\delta_\alpha(\xi_\alpha,\xi_\alpha')=+\infty$.
Furthermore, $\displaystyle \lim_{\alpha \rightarrow +\infty} \Delta_{\alpha}(E_\alpha,E_\alpha')=+\infty$
for the order topology on ${^*\!\Reel}_+^*$ and $\Gal({^*\!\Reel})$.
\end{proposition}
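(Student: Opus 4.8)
The plan is to unpack the definitions of chains and scalings and exhibit the threshold $\alpha_0$ explicitly. Since $\xi \neq \xi'$, there is some scale $\alpha_1$ with $\xi_{\alpha_1} \neq \xi_{\alpha_1}'$; pick $x, y \in {^*X}$ with $\pi_{\alpha_1}(x) = \xi_{\alpha_1}$ and $\pi_{\alpha_1}(y) = \xi_{\alpha_1}'$. The compatibility relation $\pi_{\beta,\alpha} \circ \pi_\alpha = \pi_\beta$ forces $\xi_\alpha = \pi_\alpha(x)$ and $\xi_\alpha' = \pi_\alpha(y)$ for \emph{every} $\alpha$: indeed for $\alpha \geq \alpha_1$ the points $x,y$ still represent $\xi_\alpha,\xi_\alpha'$ because $\pi_{\alpha_1,\alpha}(\pi_\alpha(x)) = \pi_{\alpha_1}(x) = \xi_{\alpha_1}$ and, since these classes are nonempty, one checks that $\pi_\alpha(x)$ and $\pi_\alpha(y)$ are forced by $\xi_{\alpha_1}\neq\xi_{\alpha_1}'$ to be the unique preimages along the chain; for $\alpha < \alpha_1$ the same holds by surjectivity and the uniqueness of transition maps. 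The key quantity is then the hyperreal $r := {^*\!d}(x,y) \in {^*\Reel}_+^*$, which is a fixed positive number independent of $\alpha$.

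Next I would analyze $\delta_\alpha(\xi_\alpha,\xi_\alpha') = \mathrm{st}(\alpha\, r)$ and $\Delta_\alpha(E_\alpha,E_\alpha') = \mathrm{Gal}(\alpha\, r)$. Since $r > 0$ is fixed, $\alpha\,r \simeq +\infty$ as soon as $\alpha > n/r$ for every $n$, i.e. as soon as $\alpha$ is infinitely large relative to $1/r$; concretely, setting $\alpha_0 := 1/r$ (or any scale exceeding it by an infinite factor — one can take $\alpha_0$ so that $\alpha_0 r \simeq +\infty$, e.g. $\alpha_0 = \omega/r$ for an infinitely large $\omega \in {^*\Reel}$, or more simply note $\alpha r \simeq +\infty$ whenever $\alpha/\alpha_0 \simeq +\infty$ with $\alpha_0 = 1/r$), we get that for all $\alpha \geq \alpha_0$ with $\alpha r \simeq +\infty$, $\delta_\alpha(\xi_\alpha,\xi_\alpha') = \mathrm{st}(\alpha r) = +\infty$. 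Actually the cleanest choice: $\alpha_0 := 1/r$ works if we interpret the statement as "for $\alpha$ large" allowing $\alpha$ ranging over scales with $\alpha > \alpha_0$ and $\alpha \not\simeq \alpha_0$; but to be safe I would just pick any $\alpha_0$ with $\alpha_0 r$ infinitely large, which exists since $1/r$ is a genuine element of ${^*\Reel}_+^*$ and ${^*\Reel}$ contains infinitely large elements (Proposition~1 applied to $\KK = {^*\Reel}$), then for every $\alpha \geq \alpha_0$ we have $\alpha r \geq \alpha_0 r \simeq +\infty$, hence $\alpha r \simeq +\infty$, giving $\delta_\alpha(\xi_\alpha,\xi_\alpha') = +\infty$.

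For the limit statement, I would unwind the order topology on $\Gal({^*\Reel})$: $\lim_{\alpha \to +\infty} \Delta_\alpha(E_\alpha,E_\alpha') = +\infty$ means that for every $\tau \in \Gal({^*\Reel})$ there is a scale $\alpha_\tau$ with $\Delta_\alpha(E_\alpha,E_\alpha') = \mathrm{Gal}(\alpha r) > \tau$ for all $\alpha \geq \alpha_\tau$. Writing $\tau = \mathrm{Gal}(t)$ for some $t \in {^*\Reel}$, it suffices to choose $\alpha_\tau$ so that $\alpha_\tau r > t + 1$ and $\alpha_\tau r - t \simeq +\infty$ (so that $\mathrm{Gal}(\alpha r) > \mathrm{Gal}(t)$ strictly in the ordered group) — e.g. take $\alpha_\tau = (|t| + \omega)/r$ with $\omega \in {^*\Reel}$ infinitely large; then for $\alpha \geq \alpha_\tau$, $\alpha r - t \geq \alpha_\tau r - t = |t| + \omega - t \geq \omega \simeq +\infty$, so $\alpha r - t \notin \Gal(0)$ and $\alpha r - t > 0$, whence $\mathrm{Gal}(\alpha r) > \mathrm{Gal}(t)$ by the definition of the order on $\Gal({^*\Reel})$. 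The same $\alpha r \to +\infty$ reasoning handles the claim for the order topology on ${^*\Reel}_+^*$ itself. The one point requiring a little care — and the main (minor) obstacle — is justifying that $x, y$ can be chosen as fixed representatives valid across the whole chain, i.e. that the chain really is "rigid" in the sense $\xi_\alpha = \pi_\alpha(x)$ for all $\alpha$; this follows from the uniqueness part of Proposition~5 (the transition maps are the unique maps intertwining the projections) together with the fact that $\pi_{\beta,\alpha}$ commute appropriately, so once $\xi_{\alpha_1} = \pi_{\alpha_1}(x)$ the value of $\xi_\alpha$ at any other scale is determined and equals $\pi_\alpha(x)$.
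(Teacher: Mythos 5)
There is a genuine gap at the very first step, and it is exactly the point you flag as ``requiring a little care.'' You claim that a chain is rigid, i.e.\ that once $\xi_{\alpha_1}=\pi_{\alpha_1}(x)$ one has $\xi_\alpha=\pi_\alpha(x)$ for \emph{every} $\alpha$, so that a single pair of representatives $x,y$ and a single hyperreal $r={^*\!d}(x,y)$ serve for the whole argument. This is true only in the direction $\alpha\leq\alpha_1$ (where $\xi_\alpha=\pi_{\alpha,\alpha_1}(\xi_{\alpha_1})=\pi_\alpha(x)$), but fails in the direction you actually need, $\alpha>\alpha_1$: the transition map $\pi_{\alpha_1,\alpha}:X_\alpha\rightarrow X_{\alpha_1}$ goes from the finer quotient to the coarser one and is not injective, so the condition $\pi_{\alpha_1,\alpha}(\xi_\alpha)=\pi_{\alpha_1}(x)$ only says that $\xi_\alpha$ is \emph{one of the many} $\simeq_\alpha$-classes contained in the $\simeq_{\alpha_1}$-class of $x$; it need not be the class of $x$ itself. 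The uniqueness of the transition maps (Proposition on transitions) says nothing about this. In fact the paper explicitly remarks, just after the proposition on $X_\infty$, that the natural map ${^*\!X}\rightarrow X_\infty$, $x\mapsto(\pi_\alpha(x))_\alpha$, is injective ``but we do not know if it is surjective'' --- so you are assuming precisely what the author declares to be an open question. Without rigidity there is no fixed $r$, and the rest of your computation ($\mathrm{st}(\alpha r)$, $\mathrm{Gal}(\alpha r)$) has no single quantity to attach to.

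The repair is the paper's route: choose representatives $x_\alpha,x_\alpha'$ \emph{separately for each} $\alpha$, take $\beta$ with $\xi_\beta\neq\xi_\beta'$, and observe that for $\alpha>\beta$ the chain condition gives $\pi_\beta(x_\alpha)=\pi_\beta(x_\beta)$, hence $\beta\,{^*\!d}(x_\alpha,x_\beta)\simeq 0$ and likewise for the primed points; the triangle inequality then yields $\beta\,{^*\!d}(x_\alpha,x_\alpha')\simeq\beta\,{^*\!d}(x_\beta,x_\beta')\not\simeq 0$, so this quantity is bounded below by a fixed positive real uniformly in $\alpha$. With that uniform lower bound in place, your subsequent analysis is essentially correct: taking $\alpha_0$ with $\alpha_0/\beta\simeq+\infty$ gives $\alpha\,{^*\!d}(x_\alpha,x_\alpha')=(\alpha/\beta)\,\beta\,{^*\!d}(x_\alpha,x_\alpha')\simeq+\infty$ for $\alpha\geq\alpha_0$, and your unwinding of convergence to $+\infty$ in the order topology on $\Gal({^*\Reel})$ (for every $\tau=\Gal(t)$ choose $\alpha_\tau$ with $\alpha_\tau$ large enough that the difference is positive and not in $\Gal(0)$) goes through with $\alpha r$ replaced by $\alpha\,{^*\!d}(x_\alpha,x_\alpha')$. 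You also correctly catch that $\alpha_0=1/r$ alone would not do and that one must take $\alpha_0$ exceeding it by an infinite factor; the same caution applies in the corrected version.
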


\begin{proof}
For each $\alpha \in {^*\!\Reel}_+^*$, we choose $x_\alpha,x_\alpha'
\in {^*\!X}$ such that $\pi_\alpha(x_\alpha)=\xi_\alpha$ and
$\pi_\alpha(x_\alpha')=\xi_\alpha'$. Since $\xi \neq \xi'$, there is
$\beta \in {^*\!\Reel}_+^*$ such that $\xi_\beta \neq \xi_\beta'$.
Thus $\beta\,{^*\!d}(x_\beta,x_\beta') \not\simeq 0$. Let $\alpha$ a scale such that $\alpha > \beta$; since
$\pi_\beta(x_\alpha)=\pi_{\beta,\alpha} \circ
\pi_\alpha(x_\alpha)=\pi_\beta(x_\beta)$ and
$\pi_\beta(x_\alpha')=\pi_{\beta,\alpha} \circ
\pi_\alpha(x_\alpha')=\pi_\beta(x_\beta')$, we have
$\beta\,{^*\!d}(x_\alpha,x_\beta) \simeq 0$ and
$\beta\,{^*\!d}(x_\alpha',x_\beta') \simeq 0$ and thus
$\beta\,{^*\!d}(x_\alpha,x_\alpha') \not\simeq 0$. Hence, if we
choose $\alpha_0$ such that $\alpha_0/\beta \simeq +\infty$, we have
for every $\alpha \geq \alpha_0$
$$\delta_\alpha(\xi_\alpha,\xi_\alpha')=
\text{st}(\alpha\,{^*\!d}(x_\alpha,x_\alpha'))=
\text{st}((\alpha/\beta)\beta\,{^*\!d}(x_\alpha,x_\alpha'))=+\infty$$
Moreover, since $\displaystyle \lim_{\alpha \rightarrow +\infty}
\Delta_\alpha(E_\alpha,E_\alpha')=\text{Gal}(\alpha\,{^*\!d}(x_\alpha,x_\alpha'))
=\text{Gal}((\alpha/\beta)\beta\,{^*\!d}(x_\alpha,x_\alpha'))$ we
see that $\Delta_\alpha(E_\alpha,E_\alpha')$ converges towards
$+\infty$ in $\Gal({^*\!\Reel})$ when $\alpha \rightarrow +\infty$
in ${^*\Reel}_+^*$.
\end{proof}

The last result suggests that, if we want to find a limit for 
$(X_\alpha,\delta_\alpha,\Delta_\alpha)$  when $\alpha
\rightarrow +\infty$, we have to widen the category
$\mathcal{G}_{{^*\!\Reel}}$ of galactic spaces. To this end, we introduce the category
$\mathcal{G}'_{{^*\!\Reel}}$ of \emph{generalized
galactic spaces}. The objects of this category are structures
$(X,\delta,\Delta)$ where $X$ is a set, $\delta$ is a generalized
distance on $X$ and $\Delta$ is a \emph{generalized galactic
distance} on the set $\mathcal{M}_X$ of metric components of $X$ for
$\delta$. This last condition means that $\Delta$ is a map
$\mathcal{M}_X \times \mathcal{M}_X \rightarrow \Gal({^*\!\Reel})_+^* \cup
\{+\infty\}$ which satisfies the general metric rule. In
$\mathcal{G}'_{{^*\!\Reel}}$, a morphism from
$(X,\delta,\Delta)$ to $(X',\delta',\Delta')$ is a map $f:X
\rightarrow X'$ such that there is a limited element $\gamma>0$ in $\KK$ so that the
following conditions are satisfied
\begin{enumerate}
\item $\forall (x_1,x_2) \in X^2 \ \
\delta'(f(x_1),f(x_2)) \leq \gamma\,\delta(x_1,x_2) $\\
(thus, $f$ induces a map $\widetilde{f}$ from the set
$\mathcal{M}_X$ of metric components of $X$ for $\delta$ to the set
$\mathcal{M}_{X'}$ of metric components of $X'$ for $\delta'$);
\item $\forall (E_1,E_2) \in {\mathcal{M}_X}^2 \ \
\Delta'(\widetilde{f}(E_1),\widetilde{f}(E_2)) \leq
\gamma \bullet\Delta(E_1,E_2)$.
\end{enumerate}

We denote by $\displaystyle
\lim_{\alpha \rightarrow +\infty} (X_\alpha,\delta_\alpha,\Delta_\alpha)$ the inverse (or projective) limit of the family $(\pi_{\beta,\alpha})_{\text{$\beta \leq
\alpha \in {^*\!\Reel}_+^*$}}$ of morphisms $\pi_{\beta,\alpha}:(X_\alpha,\delta_\alpha,\Delta_\alpha) \rightarrow (X_\beta,\delta_\beta,\Delta_\beta)$ in the category $\mathcal{G}'_{{^*\!\Reel}}$. If this limit exists, it is well defined up to an isomorphism.

\begin{proposition}
For each metric space $(X,d)$, the limit $\displaystyle \lim_{\alpha
\rightarrow +\infty} (X_\alpha,\delta_\alpha,\Delta_\alpha)$ exists
and is equal to the pair $\displaystyle
\left((X_\infty,\delta_\infty,\Delta_\infty),(\pi_{\alpha,\infty})_{\alpha
\in {^*\!\Reel}_+\*}\right)$ where $(X_\infty,\delta_\infty,\Delta_\infty)$ is a generalized galactic space and $(\pi_{\alpha,\infty})_{\alpha
\in {^*\!\Reel}_+\*}$ is a family of morphisms from $(X_\infty,\delta_\infty,\Delta_\infty) $ to  $(X_\alpha,\delta_\alpha,\Delta_\alpha)$ such that
\begin{itemize}
\item $X_\infty$ is the set of chains (families $\xi=(\xi_\alpha)_{\alpha \in
{^*\!\Reel}_+^*}$ such that $\xi_\alpha \in X_\alpha$ for all
$\alpha \in {^*\!\Reel}_+\*$ and
$\pi_{\beta,\alpha}(\xi_\alpha)=\xi_\beta$ for all $\beta \leq
\alpha \ \in {^*\!\Reel}_+\*$).
\item $\forall (\xi,\eta) \in X_{\infty}^2 \ \ \ \delta_\infty(\xi,\eta)=
 \left\{
  \begin{array}{cc}
    0 & \text{if $\xi=\eta$} \\
    +\infty & \text{if $\xi \neq \eta$} \\
  \end{array}
 \right.$
\item $\forall (\xi,\eta) \in X_{\infty}^2 \ \ \ \Delta_\infty(\{\xi\},\{\eta\})=
 \left\{
  \begin{array}{cc}
    0 & \text{if $\xi=\eta$} \\
    +\infty & \text{if $\xi \neq \eta$} \\
  \end{array}
 \right.$
\item $\forall \xi=(\xi_\alpha)_{\alpha \in {^*\!\Reel}_+^*} \in
X_\infty \ \forall \alpha \in {^*\!\Reel}_+^* \ \ \
\pi_{\alpha,\infty}(\xi)=\xi_\alpha$
\end{itemize}
\end{proposition}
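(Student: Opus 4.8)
The plan is to verify directly that the pair exhibited in the statement is the inverse limit, i.e.\ to check three things: that $(X_\infty,\delta_\infty,\Delta_\infty)$ is an object of $\mathcal{G}'_{{^*\!\Reel}}$; that the maps $\pi_{\alpha,\infty}$ are morphisms forming a cone over the system $(\pi_{\beta,\alpha})_{\beta\le\alpha}$; and that this cone is universal. Only the last of these requires genuine work, and only in one point: showing that the canonical map given by the universal property is a morphism — this is where the previous proposition is used. Everything else is bookkeeping with the $\{0,+\infty\}$-valued ``distances'' $\delta_\infty$ and $\Delta_\infty$.

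First I would record that $(X_\infty,\delta_\infty,\Delta_\infty)$ is a generalized galactic space. Since $\delta_\infty$ takes only the values $0$ (exactly on the diagonal) and $+\infty$, it satisfies the general metric rule: the triangle inequality holds because $\xi\neq\zeta$ forces $\xi\neq\eta$ or $\eta\neq\zeta$, so the right-hand side is then $+\infty$. Hence each metric component of $X_\infty$ for $\delta_\infty$ is a singleton $\{\xi\}$, and the same observation shows $\Delta_\infty$, the discrete ``distance'' on these singletons, is a generalized galactic distance. For the cone: each $\pi_{\alpha,\infty}\colon \xi\mapsto \xi_\alpha$ is a morphism with Lipschitz constant $1$, since for $\xi=\eta$ both defining inequalities read $0\le 0$, and for $\xi\neq\eta$ the right-hand sides $1\cdot\delta_\infty(\xi,\eta)=+\infty$ and $1\bullet\Delta_\infty(\{\xi\},\{\eta\})=+\infty$ dominate everything; and $\pi_{\beta,\alpha}\circ\pi_{\alpha,\infty}=\pi_{\beta,\infty}$ for $\beta\le\alpha$ is precisely the compatibility built into the definition of a chain.

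The substance is the universal property. Given a generalized galactic space $(Y,d,D)$ and morphisms $g_\alpha\colon(Y,d,D)\to(X_\alpha,\delta_\alpha,\Delta_\alpha)$ with $\pi_{\beta,\alpha}\circ g_\alpha=g_\beta$ whenever $\beta\le\alpha$, I would put $g(y)=(g_\alpha(y))_{\alpha\in{^*\!\Reel}_+^*}$: the compatibility of the $g_\alpha$ says exactly that $(g_\alpha(y))_\alpha$ is a chain, so $g$ maps into $X_\infty$, the identity $\pi_{\alpha,\infty}\circ g=g_\alpha$ is immediate, and uniqueness of $g$ follows because a chain is determined by its components. It then remains to show $g$ is a morphism, and I claim it is one with Lipschitz constant $1$, the heart of the matter being the two assertions: (a) $d(y_1,y_2)<+\infty$ implies $g(y_1)=g(y_2)$; (b) $E_1,E_2\in\mathcal{M}_Y$ with $D(E_1,E_2)<+\infty$ implies $\widetilde g(E_1)=\widetilde g(E_2)$. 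For (a), each $g_\alpha$ has some limited Lipschitz constant $\gamma_\alpha>0$, so $\delta_\alpha(g_\alpha(y_1),g_\alpha(y_2))\le\gamma_\alpha\,d(y_1,y_2)<+\infty$ for every $\alpha$; if $g(y_1)\neq g(y_2)$, the previous proposition applied to the chains $g(y_1),g(y_2)$ produces $\alpha_0$ with $\delta_{\alpha_0}(g_{\alpha_0}(y_1),g_{\alpha_0}(y_2))=+\infty$, a contradiction. For (b), choose $y_i\in E_i$ and write $D(E_1,E_2)=\Gal(t)$ with $t>0$ infinitely large; since $g_\alpha$ is a morphism, the metric components $\widetilde{g_\alpha}(E_1),\widetilde{g_\alpha}(E_2)$ of $g_\alpha(y_1),g_\alpha(y_2)$ in $X_\alpha$ satisfy $\Delta_\alpha(\widetilde{g_\alpha}(E_1),\widetilde{g_\alpha}(E_2))\le\gamma_\alpha\bullet\Gal(t)=\Gal(\gamma_\alpha t)\le\Gal(t^2)$, where the last inequality holds because $\gamma_\alpha$ limited gives $\gamma_\alpha<n$ for some $n\in\Naturel$ and $t$ infinitely large gives $nt<t^2$ — a bound independent of $\alpha$. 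Were $g(y_1)\neq g(y_2)$, the previous proposition would force $\Delta_\alpha(\widetilde{g_\alpha}(E_1),\widetilde{g_\alpha}(E_2))\to+\infty$ in $\Gal({^*\!\Reel})$, contradicting the uniform bound $\Gal(t^2)$; so $g(y_1)=g(y_2)$, and together with (a) this gives that $g$ is constant on $E_1\cup E_2$, i.e.\ $\widetilde g(E_1)=\widetilde g(E_2)$. Finally, (a) yields $\delta_\infty(g(y_1),g(y_2))\le 1\cdot d(y_1,y_2)$ (the left side is $0$ when $d(y_1,y_2)<+\infty$, and the right side is $+\infty$ otherwise) and (b) yields $\Delta_\infty(\widetilde g(E_1),\widetilde g(E_2))\le 1\bullet D(E_1,E_2)$ by the same case analysis, so $g$ is a morphism and the universal property is established.

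The step I expect to be the main obstacle is exactly this morphism property of $g$: one must rule out that $g$ can separate points, or metric components, lying at finite distance in $Y$. The only delicate point is that the Lipschitz constants $\gamma_\alpha$ of the given $g_\alpha$ are not uniform in $\alpha$; this is handled by the elementary remark that a limited element is bounded above by a standard natural number, which keeps $\gamma_\alpha t$ below $t^2$ and hence bounds $\Delta_\alpha(\widetilde{g_\alpha}(E_1),\widetilde{g_\alpha}(E_2))$ uniformly in $\alpha$, contradicting the divergence to $+\infty$ furnished by the previous proposition.
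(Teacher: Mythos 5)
Your proposal is correct and follows essentially the same route as the paper: verify that $(X_\infty,\delta_\infty,\Delta_\infty)$ is an object and the $\pi_{\alpha,\infty}$ a compatible cone, then establish the universal property by showing the induced map cannot separate points (or metric components) at finite distance, invoking the preceding proposition on chains. If anything, your treatment of the non-uniform Lipschitz constants $\gamma_\alpha$ (bounding $\Gal(\gamma_\alpha t)$ by $\Gal(t^2)$ via a standard integer majorant) is more explicit than the paper's terse appeal to ``$k_\alpha$ is a real number''.
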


This result says that the limit of the $\alpha$-scaling of $(X,d)$
when $\alpha$ approaches $+\infty$ is a huge generalized galactic
space in which the distance between two different points is
$+\infty$ and the galactic distance between two different metric
components is $+\infty$. If ${^*\!X}$ denotes the nonstandard
extension of $X$ used in the construction of the scaling of $(X,d)$,
we see that there is a natural map
$$
 \begin{array}{ccc}
   {^*\!X} & \longrightarrow & X_\infty \\
   x & \longmapsto & (\pi_{\alpha}(x))_{\alpha \in {^*\!\Reel}_+^*} \\
 \end{array}
$$
which is clearly injective but we do not know if it is surjective.

\begin{proof}[Proof of the proposition]
We remark that the set of metric components of $X_\infty$ is
$$\mathcal{M}_{X_\infty}=\{ \{\xi\} \ ; \ \xi \in X_\infty\}$$
It is clear that each $\pi_{\alpha,\infty}$ is a map $X_\infty
\rightarrow X_\alpha$ such that
$\pi_{\beta,\infty}=\pi_{\beta,\alpha} \circ \pi_{\alpha,\infty}$
whenever $\beta \leq \alpha \in {^*\!\Reel}_+\*$. In an obvious way,
$\pi_{\alpha,\infty}$ is a morphism of generalized galactic space
(with a Lipschitz constant equal to 1 for instance) from
$(X_\infty,\delta_\infty,\Delta_\infty)$ to
$(X_\alpha,\delta_\alpha,\Delta_\alpha)$.

Let $(Y,d,D)$ be a generalized galactic space and
$(\psi_\alpha)_{\alpha \in {^*\!\Reel}_+\*}$ be a family of
morphisms $\psi_\alpha$ from $(Y,d,D)$ to
$(X_\alpha,\delta_\alpha,\Delta_\alpha)$ such that $\psi_\beta=
\pi_{\beta,\alpha} \circ \psi_\beta$ for all $\beta \leq \alpha \in
{^*\!\Reel_+^*}$. For each $y \in Y$, the family
$\psi_\infty(y)=(\psi_\alpha(y))_{\alpha \in {^*\!\Reel}_+\*}$
belongs to $X_\infty$. Thus, we get a map $\psi_\infty:Y \rightarrow
X_\infty$ which is the unique map which satisfies
$\psi_\alpha=\pi_{\alpha,\infty} \circ \psi_\infty$ for all
 $\alpha \in {^*\!\Reel}_+\*$. Let $y$ and $y'$ two points of $Y$
 such that $\psi_\infty(y) \neq \psi_\infty(y')$. Hence, there is
 $\alpha_0 \in {^*\!\Reel}_+\*$ such that $\psi_\alpha(y) \neq
 \psi_\alpha(y')$ for every $\alpha \geq \alpha_0$. For sufficiently
large $\alpha$, we see that
$\delta_\alpha(\psi_\alpha(y),\psi_\alpha(y')=+\infty$ and thus
$d(y,y')=+\infty$. Consequently, $\psi_\infty$ induces a map
$\widetilde{\psi}_\infty : \mathcal{M}_Y \rightarrow
\mathcal{M}_{X_\infty}$. Let $E,E' \in \mathcal{M}_Y$ such that
$\widetilde{\psi}_\infty(E) \neq \widetilde{\psi}_\infty(E')$. There
are $\xi \neq \xi' \in X_\infty$ such that
$\widetilde{\psi}_\infty(E)=\{\xi\}$ and
$\widetilde{\psi}_\infty(E')=\{\xi'\}$. For each $\alpha \in
{^*\!\Reel}_+^*$, there is a real number $k_\alpha>0$ such that
$$\Delta_\alpha(\widetilde{\psi}_\alpha(E),\widetilde{\psi}_\alpha(E'))
\leq k_\alpha\,D(E,E')$$ and
$\Delta_\alpha(\widetilde{\psi}_\alpha(E),\widetilde{\psi}_\alpha(E'))=
\text{Gal}(\alpha\,{^*\!d}(x_\alpha,x_\alpha'))$ where
$x_\alpha,x_\alpha'$ are elements of ${^*\!X}$ such that
$\pi_\alpha(x_\alpha)=\pi_{\alpha,\infty}(\xi)$ and
$\pi_\alpha(x_\alpha')=\pi_{\alpha,\infty}(\xi')$. Since
$\text{Gal}(\alpha\,{^*\!d}(x_\alpha,x_\alpha'))$ is arbitrarily
large in the ordered group $\Gal({^*\Reel})$ when $\alpha
\rightarrow +\infty$ and since $k_\alpha$ is
 a real number, we deduce that $D(E,E')=+\infty$.
Thus, $\psi_\infty$ is a morphism
 $(Y,d,D) \rightarrow (X_\infty,\delta_\infty,\Delta_\infty)$ in the
 category of generalized galactic spaces.
\end{proof}

\vskip 3mm

\textbf{5.4} We may think there is a link between the concept of Gromov-Hausdorff convergence and our notion on nonstandard scaling of a metric space. Firstly, let us recall what is the Gromov-Hausdorff distance \cite{GromLafPans}. Given
two subsets $A$ and $B$ of a metric space $(Z,\delta)$, the
Hausdorff distance between $A$ and $B$ in $Z$ is
$$d_H^Z(A,B):=\inf \{\Eps \in \Reel_+^* \ ; \ \text{$A \subset
V_\Eps(B)$ and $B \subset V_\Eps(A)$}\}$$ where, for each subset $C
\subset Z$, the set $V_\Eps(C)$ is the $\Eps$-neighbourhood $\{x \in
Z \ ; \ \delta(x,C)<\Eps\}$ of $C$. Then, the \emph{Gromov-Hausdorff
distance} $d_{GH}(E,F)$ of two metric spaces $E$ and $F$ is the
infimum of numbers $d_H^Z(i(E),j(F))$ for any $(Z,i,j)$ such that
$Z$ is a metric space, $i:E \rightarrow Z$ and $j:F \rightarrow Z$
are isometric embeddings.

The Gromov-Hausdorff distance is not really a distance, mainly
because there are non isometric metric spaces $E$ and $F$ such that
$d_{GH}(E,F)=0$ (for instance $\Reel$ and $\Rationnel$). This
problem disappears in the collection of isometric classes of compact
metric spaces. Nevertheless, we say that a sequence $(E_n)$ of
metric spaces (compact or not compact) converges toward a metric
space $F$ for the Gromov-Hausdorff distance if $d_{GH}(E_n,F)$ converges
to 0 in $\Reel_+$ when $n \rightarrow +\infty$.

\begin{theoreme} Let $(\lambda_n)$ be a standard sequence of strictly positive real numbers such
that  $\lim_{n \rightarrow +\infty}
\lambda_n=0$. If the sequence $(X,\lambda_n d)$ converges to a metric space $(F,d_F)$ for the  Gromov-Hausdorff distance, then, for each infinitely large $\nu
\in {^*\Naturel}$, the $\lambda_\nu$-scaling $X_{\lambda_\nu}$ of $X$ is a galactic
space isometric to the $1$-scaling $F_1$ of $(F,d_F)$.
\end{theoreme}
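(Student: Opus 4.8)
The plan is to translate the Gromov--Hausdorff convergence hypothesis into the existence of a sequence of approximate isometries with error tending to $0$, to transfer that sequence to the infinitely large index $\nu$, where it becomes an ``infinitesimal isometry'' from ${}^*\!X$ to ${}^*\!F$, and then to verify that such a map descends to a genuine isometry of galactic spaces between $X_{\lambda_\nu}$ and $F_1$.

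First I would recall the elementary fact that $d_{GH}(A,B)<\Eps$ produces a map $h:A\to B$ with $|d_B(h(a),h(a'))-d_A(a,a')|\le 2\Eps$ for all $a,a'\in A$ and with $h(A)$ being $2\Eps$-dense in $B$: take isometric embeddings of $A$ and $B$ into a common metric space $Z$ with $d_H^Z<\Eps$ and let $h(a)$ be a point of $B$ within distance $\Eps$ of $a$. Applying this, together with countable choice, to the convergent sequence $(X,\lambda_n d)\to(F,d_F)$, one obtains a standard sequence $(f_n)_{n\in\Naturel}$ of maps $f_n:X\to F$ and a standard sequence $(\Eps_n)_{n\in\Naturel}$ of positive reals with $\Eps_n\to 0$ such that, for every $n$,
\begin{align*}
&\forall (x,x')\in X^2\qquad \bigl|\,d_F(f_n(x),f_n(x'))-\lambda_n\,d(x,x')\,\bigr|\le\Eps_n,\\
&\forall y\in F\ \ \exists x\in X\qquad d_F(f_n(x),y)\le\Eps_n.
\end{align*}
Choosing the superstructure $V(S)$ so that $S$ contains $F$ in addition to $X\cup\Reel$, the pair $\bigl((f_n),(\Eps_n)\bigr)$ is a standard object; by transfer there is, for each $n\in{}^*\Naturel$, an internal map ${}^*\!f_n:{}^*\!X\to{}^*\!F$ and an element ${}^*\Eps_n>0$ of ${}^*\Reel$ for which the transferred statements hold for all $x,x'\in{}^*\!X$ and all $y\in{}^*\!F$. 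Since ``$\Eps_n\to 0$'' transfers, ${}^*\Eps_\nu\simeq 0$ for every infinitely large $\nu\in{}^*\Naturel$. Fix such a $\nu$ and set $g:={}^*\!f_\nu$ and $\eta:={}^*\Eps_\nu\simeq 0$; then $|\,{}^*\!d_F(g(x),g(x'))-\lambda_\nu\,{}^*\!d(x,x')\,|\le\eta$ for all $x,x'\in{}^*\!X$, and $g({}^*\!X)$ is $\eta$-dense in ${}^*\!F$.

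Next, let $q:{}^*\!F\to F_1$ denote the canonical projection associated with the $1$-scaling of $(F,d_F)$ (the analogue of $\pi_{\lambda_\nu}:{}^*\!X\to X_{\lambda_\nu}$), and define $\phi:X_{\lambda_\nu}\to F_1$ by $\phi(\pi_{\lambda_\nu}(x))=q(g(x))$. Because $\lambda_\nu\,{}^*\!d(x,x')$ and ${}^*\!d_F(g(x),g(x'))$ differ by a quantity of absolute value at most $\eta\simeq 0$, each is infinitely small exactly when the other is, each is infinitely large exactly when the other is, and in all cases they have the same value under $\text{st}$ in $\Reel_+\cup\{+\infty\}$; this yields simultaneously that $\phi$ is well defined, that $\delta_1(\phi(\xi),\phi(\zeta))=\delta_{\lambda_\nu}(\xi,\zeta)$ for all $\xi,\zeta\in X_{\lambda_\nu}$ (hence also that $\phi$ is injective), and, via the $\eta$-density of $g({}^*\!X)$ in ${}^*\!F$, that $\phi$ is surjective. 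Thus $\phi$ is a bijection of $X_{\lambda_\nu}$ onto $F_1$ preserving the generalized distances, and by the remark following the definition of an isometry of galactic spaces it induces a bijection $\widetilde\phi:\mathcal{M}_{X_{\lambda_\nu}}\to\mathcal{M}_{F_1}$ between the sets of metric components, a representative of $\widetilde\phi(E)$ being $g(x)$ for any representative $x$ of $E$. Finally, given metric components $E_1,E_2$ of $X_{\lambda_\nu}$ with representatives $x_1,x_2\in{}^*\!X$, one has $\Delta_{\lambda_\nu}(E_1,E_2)=\Gal\bigl(\lambda_\nu\,{}^*\!d(x_1,x_2)\bigr)$ and $\Delta_1(\widetilde\phi(E_1),\widetilde\phi(E_2))=\Gal\bigl({}^*\!d_F(g(x_1),g(x_2))\bigr)$; since the two arguments differ by an element of $\Hal(0)\subset\Gal(0)$, these galaxies coincide, so $\phi$ preserves the galactic distance and is therefore an isometry of galactic spaces.

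The main obstacle is the first step: one must pass from the external, $\Eps$-approximation description of Gromov--Hausdorff convergence to a single standard object amenable to transfer --- genuinely choosing the approximate isometries $f_n$, keeping $(f_n)$ inside the superstructure by enlarging $S$ to contain $F$, and correctly identifying what ${}^*\!f_\nu$ is at an infinite index, namely a map that becomes an exact isometry after collapsing ${}^*\!X$ and ${}^*\!F$ by the relations ``$\lambda_\nu\,{}^*\!d\simeq 0$'' and ``${}^*\!d_F\simeq 0$''. Once this infinitesimal isometry is available, the rest is a routine unwinding of the definitions of $\delta_\alpha$, $\Delta_\alpha$, $\text{st}$ and $\Gal$.
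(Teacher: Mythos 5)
Your proof is correct, and its nonstandard skeleton is the same as the paper's: extract from the Gromov--Hausdorff convergence a standard sequence of witnesses indexed by $n$, transfer it, evaluate at the infinite index $\nu$ where the error ${}^*\Eps_\nu$ becomes infinitesimal, and observe that an infinitesimal error disappears under $\mathrm{st}$ and $\Gal$. Where you diverge is in the choice of witness and in the shape of the final isometry. You use the characterization of $d_{GH}(A,B)<\Eps$ via $2\Eps$-isometries $h:A\to B$ with $2\Eps$-dense image, which lets you build a \emph{direct} map $\phi:X_{\lambda_\nu}\to F_1$, $\pi_{\lambda_\nu}(x)\mapsto q(g(x))$, and check in one pass that it is well defined, bijective, and preserves $\delta$ and $\Delta$. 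The paper instead uses the characterization via admissible couplings, i.e.\ maps $\delta_n:X\times F\to\Reel_+$ making the disjoint union $X\amalg F$ a (pseudo)metric space with each side $\Eps_n$-close to the other; after transfer it quotients ${}^*\!X\amalg{}^*\!F$ by $d_\nu\simeq 0$ to obtain a single galactic space $G$ into which both $X_{\lambda_\nu}$ and $F_1$ embed isometrically and onto which both map surjectively, concluding by composing one isometry with the inverse of the other. The coupling route is slightly more symmetric and defers all point-by-point choices to a single selection of $\delta_n$ per $n$; your route requires choosing a value $f_n(x)$ for every $x\in X$ (so rather more than the ``countable choice'' you invoke, though still unproblematic under AC) but yields the isometry explicitly without an intermediate space. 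Both are complete; your remaining obligations --- enlarging $S$ so that $F$ lies in the superstructure, and the $\Hal(0)\subset\Gal(0)$ step for the galactic distance --- are handled correctly and are also implicit in the paper's argument.
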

\begin{proof}
Firstly, we give a more convenient formulation of the Gromov-Hausdorff distance $d_{GH}(E,F)$ of two metric spaces $(E,d_E)$ and $(F,d_F)$: it
is the infimum of the set of real numbers $\Eps>0$ such that there
exists a map $\delta:E \times F \rightarrow \Reel_+$ checking the
two following properties:
\begin{enumerate}
\item the map $d:(E \amalg F)^2 \rightarrow \Reel_+$ defined by
$$d(x,y)=
 \left\{
  \begin{array}{ll}
    d_E(x,y) & \text{if $(x,y) \in E^2$} \\
    d_F(x,y) & \text{if $(x,y) \in F^2$} \\
    \delta(x,y) & \text{if $(x,y) \in E \times F$} \\
    \delta(y,x) & \text{if $(x,y) \in F \times E$} \\
  \end{array}
 \right.
$$ is such that $d(x,z) \leq d(x,y)+d(y,z)$ for every $x$, $y$ and $z$
in the disjoint union $E \amalg F$ of $E$ and $F$,
\item $\text{$(\forall x \in E \ \exists y \in F \ \ \delta(x,y) <
\Eps)$ and $(\forall x \in F \ \exists y \in E \ \ \delta(y,x) <
\Eps$)}.$
\end{enumerate}

Now we return to the proof of the theorem.
From the convergence hypothesis, we deduce that, there is a sequence
$(\Eps_n)$ of real numbers such that $lim_{n \rightarrow
+\infty}\Eps_n=0$ and there is a sequence $(\delta_n)$ of maps from
$X \times F$ to $\Reel_+$ such that, for all $n \in \Naturel$:
\begin{enumerate}
\item the map $d_n:(X \amalg F)^2 \rightarrow \Reel_+$ defined by
$$d_n(x,y)=
 \left\{
  \begin{array}{ll}
    \lambda_n d(x,y) & \text{si $(x,y) \in X^2$} \\
    d_F(x,y) & \text{si $(x,y) \in F^2$} \\
    \delta_n(x,y) & \text{si $(x,y) \in X \times F$} \\
    \delta_n(y,x) & \text{si $(x,y) \in F \times X$} \\
  \end{array}
 \right.
$$ is such that $d_n(x,z) \leq d_n(x,y)+d_n(y,z)$ for every $x$, $y$ and $z$
in the disjoint union $X \amalg F$ of $X$ and $F$,
\item $\text{$(\forall x \in X \ \exists y \in F \ \ \delta_n(x,y) <
\Eps_n)$ and $(\forall x \in F \ \exists y \in X \ \ \delta_n(y,x) <
\Eps_n$)}$.
\end{enumerate}

Let $\nu$ be an infinitely large element of ${^*\Naturel}$. Thus, we
get a map $\delta_\nu:{^*X} \times {^*F} \rightarrow {^*\Reel}$ and
a map $d_\nu:({^*X} \amalg {^*F})^2 \rightarrow {^*\Reel}$ such that
\begin{enumerate}
\item $\forall (x,y) \in ({^*X} \amalg {^*F})^2 \ \
d_\nu(x,y)=
 \left\{
  \begin{array}{ll}
    \lambda_\nu d(x,y) & \text{si $(x,y) \in {^*X}^2$} \\
    d_F(x,y) & \text{si $(x,y) \in {^*F}^2$} \\
    \delta_\nu(x,y) & \text{si $(x,y) \in {^*X} \times {^*F}$} \\
    \delta_\nu(y,x) & \text{si $(x,y) \in {^*F} \times {^*X}$} \\
  \end{array}
 \right.$
\item  $\forall x,y,z \in {^*X} \amalg {^*F} \ \ d_\nu(x,z) \leq
d_\nu(x,y)+d_\nu(y,z)$,
\item $\text{$(\forall x \in {^*X} \ \exists y \in {^*F} \ \ \delta_\nu(x,y) \simeq 0)$
and $(\forall x \in {^*F} \ \exists y \in {^*X} \ \ \delta_\nu(y,x)
\simeq 0)$}$.
\end{enumerate}
Then, we consider the quotient set $G:={^*X} \amalg {^*F}/\sim$ for
the equivalence relation
$$\forall x,y \in {^*X} \amalg {^*F}\ (x \sim y \Longleftrightarrow d_\nu(x,y)
\simeq 0).$$ If, for each $x \in {^*X} \amalg {^*F}$, we denote by
$[x]$ the equivalence class of $x$, we can define a generalized
distance $\delta$ on $G$ such that
$$\forall (x,y) \in ({^*X} \amalg {^*F})^2 \ \
\delta([x],[y])=\text{st}(d_\nu(x,y))$$ and a galactic distance
$\Delta$ on the set $\mathcal{M}_G$ of metric components of $G$ for
$\delta$ such that
$$\forall (C,D) \in \mathcal{M}_G^2 \ \
\Delta(C,D)=\Gal(d_\nu(x_C,x_D))$$ where $x_C,x_D$ are any points in
${^*X} \amalg {^*F}$ verifying  $[x_C] \in C$ and $[x_D] \in D$.
Then, $(G,\delta,\Delta)$ is a galactic space and the map
$$
 \left\{
  \begin{array}{ccc}
    {^*X} & \longrightarrow & G \\
    x & \longmapsto & [x] \\
  \end{array}
 \right.
$$
induces an isometry between the $\lambda_\nu$-scaling of $(X,d)$
and $(G,\delta,\Delta)$. In the same way, the map
$$
 \left\{
  \begin{array}{ccc}
    {^*F} & \longrightarrow & G \\
    x & \longmapsto & [x] \\
  \end{array}
 \right.
$$
induces an isometry between the 1-scaling of $(F,d_F)$ and
$(G,\delta,\Delta)$.

\end{proof}

\vskip 2mm


\nocite{*}

\bibliographystyle{plain}

\bibliography{Contr}

\begin{thebibliography}{10}

\bibitem{DR89}
F.~{\textsc{Diener}} and G.~{\textsc{Reeb}}.
\newblock {\em Analyse Non Standard}.
\newblock Hermann, Paris, 1989.

\bibitem{Drutu01}
C.~{\textsc{Drutu}}.
\newblock Cônes asymptotiques et invariants de quasi-isometrie pour les espaces
  métriques hyperboliques.
\newblock {\em Annales de l'Institut Fourier}, 51(1):81--97, 2001.

\bibitem{DruSap05}
C.~{\textsc{Drutu}} and M.~{\textsc{Sapir}}.
\newblock Tree-graded spaces and asymptotic cones of groups.
\newblock {\em Topology}, 44:959--1058, 2005.

\bibitem{EO05}
A.~{\textsc{Erscher}} and D.~{\textsc{Osin}}.
\newblock Fundamental groups of asymptotic cones.
\newblock {\em Topology}, 44:827--843, 2005.

\bibitem{Grom81}
M.~{\textsc{Gromov}}.
\newblock Groups of polynomial growth and expanding maps.
\newblock {\em Publ. Math. IHES}, 53:53--78, 1981.

\bibitem{GromLafPans}
M.~{\textsc{Gromov}}.
\newblock {\em Structures métriques pour les variétés riemanniennes}.
\newblock Cédic-Fernand-Nathan, 1981.
\newblock notes de cours rédigées par J.~Lafontaine et P.~Pansu.

\bibitem{Grom99}
M.~{\textsc{Gromov}}.
\newblock {\em Metric Structures for Riemannian and non-Riemannan spaces}.
\newblock Birkhäuser, 1999.

\bibitem{Hens87}
C.W. {\textsc{Henson}}.
\newblock Foundations of nonstandard analysis.
\newblock In L.O. {\textsc{Arkeryd et al}}, editor, {\em Nonstandard Analysis:
  Theory and Applications}, pages 1--49. Kluwer Academic Publishers, 1997.

\bibitem{Hrba79}
K.~{\textsc{Hrba\v{c}ek}}.
\newblock Nonstandard set theory.
\newblock {\em Amer. Math. Monthly}, 86:659--677, 1979.

\bibitem{J99}
T.Y. {\textsc{Jen}}.
\newblock {\em Formalisation des relations spatiales topologiques et
  applications à l'exploitation des bases de données géographiques}.
\newblock Thèse de doctorat, Université de Paris-sud, UFR Scientifique d'Orsay,
  1999.

\bibitem{JB95}
T.Y. {\textsc{Jen}} and P.~{\textsc{Boursier}}.
\newblock Filtre géographique et évolution des relations spatiales topologiques
  à différentes échelles.
\newblock In {\em Secondes Journées de la Recherche CASSINI'95}, Marseille,
  France, 1995.

\bibitem{KSTT}
L.~{\textsc{Kramer}}, S.~{\textsc{Shelah}}, K.~{\textsc{Tent}}, and
  S.~{\textsc{Thomas}}.
\newblock Asymptotic cones of finitely presented groups.
\newblock {\em Advances in Math}, 193:142--173, 2005.

\bibitem{KT04}
L.~{\textsc{Kramer}} and K.~{\textsc{Tent}}.
\newblock Asymptotic cones and ultrapowers of lie groups.
\newblock {\em Bull. Symbolic Logic}, 10:175--185, 2004.

\bibitem{ANSLoeb}
P.A. {\textsc{Loeb}}.
\newblock An introduction to nonstandard analysis.
\newblock In P.A. {\textsc{Loeb}} and M.~{\textsc{Wolff}}, editors, {\em
  Nonstandard Analysis for the working mathematician}, Mathematics and its
  applications. Kluwer, Dorderecht, 2000.

\bibitem{Nelson}
E.~{\textsc{Nelson}}.
\newblock Internal set theory: a new approach to nonstandard analysis.
\newblock {\em Bull. Amer. Math. Soc.}, 83(6):1165--1198, 1977.

\bibitem{R66}
A.~{\textsc{Robinson}}.
\newblock {\em Non Standard Analysis}.
\newblock North-Holland, 1966.
\newblock Reimpression : Princeton University Press, 1996.

\bibitem{vdB87}
I.~{\textsc{van den Berg}}.
\newblock {\em Nonstandard Asymptotic Analysis}.
\newblock Number 1249 in Lecture Notes in Mathematics. Springer-Verlag, 1987.

\bibitem{vdDW}
L.~{\textsc{van den Dries}} and A.J. {\textsc{Wilkie}}.
\newblock On {G}romov's theorem concerning groups of polynomial growth and
  elementary logic.
\newblock {\em J. Algebra}, 89:349--374, 1984.

\bibitem{GW07}
G.~{\textsc{Wallet}}.
\newblock Déformation topologique par changement d'échelle.
\newblock {\em Revue Internationale de Géomatique (European Journal of GIS and
  Spatial Analysis)}, 10(3-4):433--436, 2000.

\end{thebibliography}


\vskip 1cm

\end{document}